\documentclass[oneside,oldfontcommands]{article}
\usepackage[a4paper,margin=4cm]{geometry}
\usepackage[T1]{fontenc}
\usepackage{amsfonts,amsthm,amssymb,amsmath,mathdots, bbm,mathabx,mathrsfs,subcaption
}
\usepackage[linecolor=white,backgroundcolor=white,bordercolor=white,textsize=tiny]{todonotes}
\usepackage{amssymb, amscd}
\usepackage{color,graphicx}
\usepackage[all,cmtip]{xy}
\usepackage{lmodern}

\usepackage{silence}
\WarningFilter{latex}{Marginpar on page} 

\numberwithin{equation}{section} 
\title{Large permutation invariant random matrices are asymptotically free over the diagonal 
}
\author{Benson Au\\\small Department of Mathematics\\\small University of California\\\small Berkeley, CA 94720-3840\and
Guillaume C\'ebron\\\small IMT; UMR 5219 \\\small Université de Toulouse; CNRS \\\small UPS, F-31400 Toulouse; France\and
Antoine Dahlqvist\\\small University College Dublin\\\small School of Mathematics and Statistics\\\small Belfield Dublin 4; Ireland\and
Franck Gabriel\\\small Imperial College London\\\small 
South Kensington Campus\\\small London SW7 2AZ; UK\and
Camille Male\\\small IMB; UMR 5251\\\small Université de Bordeaux; CNRS \\\small F-33405 Talence; France}
\date{\today}

\newtheorem{Th}{Theorem}[section]

\newtheorem{Def}[Th]{Definition}
\newtheorem{Prop}[Th]{Proposition}

\newtheorem{Lem}[Th]{Lemma}

\theoremstyle{remark}

\newtheorem{Rk}[Th]{Remark}

\renewcommand\leq\leqslant 
\renewcommand\geq\geqslant

\def\Tr{\mathrm{Tr}}

\def\esp{\mathbb E}

\def\etc{,\ldots ,}

\def\limN{\underset{N \rightarrow \infty}\longrightarrow}

\def\eps{\varepsilon}

\def\eq{\begin{eqnarray*}}
\def\qe{\end{eqnarray*}}
\def\eqa{\begin{eqnarray}}
\def\qea{\end{eqnarray}}

\def\mbf{\mathbf}
\def\mcal{\mathcal}

\def\mbb{\mathbb}

\def\mrm{\mathrm}

\def\bor{\begin{color}{orange}}
\def\eor{\end{color}}

\def\bte{\begin{color}{teal}}
\def\ete{\end{color}}
    
\begin{document}
\maketitle

\begin{abstract}
\noindent We prove that independent families of permutation invariant random matrices are asymptotically free over the diagonal, both in probability and in expectation, under a uniform boundedness assumption on the operator norm. We can relax the operator norm assumption to an estimate on sums associated to graphs of matrices, further extending the range of applications (for example, to Wigner matrices with exploding moments and so the sparse regime of the Erd\H{o}s-R\'{e}nyi model). The result still holds even if the matrices are multiplied entrywise by bounded random variables (for example, as in the case of matrices with a variance profile and percolation models).
\end{abstract}

\section{Introduction}

Non-commutative probability is a generalization of classical probability that extends the probabilistic perspective to non-commuting random variables. Following the seminal work of Voiculescu \cite{Voi91}, this setting provides a robust framework for studying the spectral theory of random multi-matrix models in the large $N$ limit. We outline the basic approach of this program as follows. 
\begin{enumerate}
\item In the non-commutative framework, Voiculescu's notion of \emph{free independence} plays the role of classical independence. This simple parallel yields a surprisingly rich theory, with free analogues of many classical concepts (e.g., the free CLT, free cumulants, free entropy, and conditional expectations). The scope of free probability further benefits from a robust analytic framework, allowing for the \emph{free harmonic analysis} \cite{VDN92}.
\item Free independence describes the large $N$ limit behavior of independent random matrices in many generic situations: for example, unitarily invariant random matrices \cite{Voi91} and Wigner matrices \cite{Dyk93}. The free probability machinery allows for tractable computations. In particular, we can compute the limiting spectral distributions of rational functions in these random matrices.
\end{enumerate}
Yet, many random matrix models of interest lie beyond the scope of free probability. One can accommodate such models by defining a new framework. This is the perspective of traffic probability \cite{Au16,Cebron2016,Gabriel2015c, Gabriel2015a, Gabriel2015b, MalePeche14, Male2011,Male17}.

\begin{enumerate}
\item The traffic framework enlarges the non-commutative probability framework. The notion of traffic distribution is richer and allows to consider a notion of independence which encodes the non-commutative notions of independences.
\item Permutation invariant random matrices provide a canonical model of traffic independence in the large $N$ limit. 
\end{enumerate}
The purpose of this article is to demonstrate that Voiculescu's notion of conditional expectation provides an analytic tool for traffic independence, in the context of large random matrices.

\subsection{Background}

We first recall the notion of freeness with amalgamation \cite{Voiculescu1986}.
\begin{Def}
\begin{itemize}
	\item An \emph{operator-valued probability space} is a triple $(\mcal A,\mcal B, E)$ consisting of a unital algebra $\mcal A$, a unital subalgebra $\mcal B\subset \mcal A$ and a \emph{conditional expectation} $E: \mcal A \to \mcal B$, i.e. a unit-preserving linear map such that $E(b_1ab_2) = b_1 E(a) b_2$ for any $a\in \mcal A$ and $b_1, b_2 \in \mcal B$.
	\item Let $K$ be an arbitrary index set. We write $\mcal B\langle X_k:k\in K\rangle $ for the algebra generated by freely noncommuting indeterminates $(X_k)_{k\in K}$ and $\mcal B$. For a monomial $P=b_0X_{k_1}b_1X_{k_2}\cdots X_{k_{n}}b_n\in \mcal B \langle X_k:k\in K\rangle$, the degree of $P$ is $n$ and its \emph{coefficients} are $b_0,\ldots, b_n$. The \emph{operator-valued distribution}, or $E$-distribution, of a family $\mbf A = (A^{(k)})_{k\in K}$ of elements in $\mcal A$ is the map of operator-valued moments
		$$E_{\mbf A}: P \in \mcal B\langle X_k:k\in K\rangle \mapsto E\big[P(\mbf A)\big] \in \mcal B.$$
	\item Sub-families $\mbf A_{1}=(A_{1}^{(k)})_{k\in K} \etc \mbf A_{L}=(A_{L}^{(k)})_{k\in K}$ of $\mathcal{A}$ are called \emph{free with amalgamation over $\mcal B$} (in short, free over $\mcal B$) whenever for any $n\geq 2$, any $\ell_1\neq \ell_2\neq \dots \neq \ell_n$ and any monomials $P_{1},\ldots,P_{n} \in \mcal B\langle X_k:k\in K\rangle$, one has 
	$$E \left[ \Big( P_{1}(\mbf A_{\ell_1}) - E \big[P_{1}(\mbf A_{\ell_1}) \big]\Big)  \cdots \Big( P_{n} (\mbf A_{\ell_n})- E \big[ P_{n} (\mbf A_{\ell_n})\big] \Big) \right]=0.$$
\end{itemize}
\end{Def}
	Ordinary freeness is the special case of operator-valued freeness when the algebra $\mcal B$ equals $\mbb C$. Operator-valued freeness works mostly like the ordinary one, and the freeness with amalgamation of $\mbf A_{1} \etc \mbf A_{L}$ allows to compute the operator-valued distribution of $\mbf A_{1}\cup\ldots \cup \mbf A_{L}$ from the separate operator-valued distributions.

For $N\in \mathbb{N}$, the set of $N\times N$ matrices is denoted by $\mathcal{M}_N$. We define the diagonal map $\Delta$ on $\mathcal{M}_N$ by $\Delta A= (\delta_{i,j} A(i,j))_{i,j=1}^N$ for any $A=(A(i,j))_{i,j=1}^N$. The set of $N\times N$ diagonal matrices is denoted by $\mathcal{D}_N$. Note that $(\mathcal{M}_N,\mathcal{D}_N,\Delta)$ is an operator-valued non-commutative probability space.

For random matrices, freeness with amalgamation over the diagonal first appeared in the work of Shlyakhtenko \cite{Shlyakhtenko1996} on Gaussian Wigner matrices with variance profile. Our motivation to consider this question for permutation invariant matrices is a recent result by Boedihardjo and Dykema \cite{BD17}. They proved that certain random Vandermonde matrices based on i.i.d. random variables are asymptotically $\mcal R$-diagonal over the diagonal matrices. These random matrices are invariant by right multiplication by permutation matrices. If this symmetry was with respect to multiplication by any unitary matrices, then we would have obtained convergence to ordinary $\mcal R$-diagonal elements \cite[Theorem 15.10]{NS}. This then suggests a link between permutation invariance and freeness with amalgamation over the diagonal.

\subsection{Asymptotic freeness with amalgamation over the diagonal}

In the sequel, when we speak about a family $\mbf A_N$ of $N\times N$ random matrices, we implicitly refer to a whole sequence $(\mbf A_N)_{N\in \mathbb{N}}$ of families of $N\times N$ random matrices $\mbf A_N = (A_N^{(k)})_{k\in K}$, where $K$ is independent of $N$. The family $\mbf A_N$ is said to be \emph{permutation invariant} if for any permutation $\sigma$ of the set $[N]:=\{1,\ldots,N\}$, the following two families of random variables have the same distribution:
$$\left(A_N^{(k)}(i,j)\right)_{k\in K,1\leq i,j\leq N}\overset{ \mcal Law}=\left(A_N^{(k)}(\sigma(i),\sigma(j))\right)_{k\in K,1\leq i,j\leq N}.$$
Equivalently, for any permutation matrix $S$ of size $N$, $$\left(A_N^{(k)}\right)_{k\in K}\overset{ \mcal Law}=\left(SA_N^{(k)}S^{-1}\right)_{k\in K}.$$
The following result is a simplified version of our main result (Theorem \ref{MainTh}).

\begin{Th}\label{FirstTh} Let $\mbf A_{N,1}=(A_{N,1}^{(k)})_{k\in K} \etc \mbf A_{N,L}=(A_{N,L}^{(k)})_{k\in K}$ be independent families of random matrices which are uniformly bounded in operator norm. Assume moreover that each family is permutation invariant. 

Then $\mbf A_{N,1} \etc \mbf A_{N,L}$ are \emph{asymptotically free with amalgamation} over $\mathcal{D}_N$ in the following sense. For any $n\geq 2$, any $\ell_1\neq \ell_2\neq \dots \neq \ell_n$ and any monomials $P_{N,1},\ldots,P_{N,n} \in \mathcal{D}_N\langle X_k:k\in K\rangle$ such that their degrees and the norms of their coefficients are bounded uniformly in $N$,  the matrix
	$$\eps_N := \Delta \left[ \big( P_{N,1}(\mbf A_{N,\ell_1}) - \Delta P_{N,1}(\mbf A_{N,\ell_1}) \big)  \cdots \big( P_{N,n} (\mbf A_{N,\ell_n})- \Delta  P_{N,n} (\mbf A_{N,\ell_n}) \big) \right]$$
converges to zero in Schatten $p$-norm for all $p\geq 1$. Namely, for any $p\geq 1$, we have $$ \esp\left[ \frac 1 N \Tr  \big[(\eps_N\eps_N^*)^p\big] \right] \limN 0.$$
\end{Th}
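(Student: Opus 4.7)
The plan is a method of moments that reduces to the Hilbert--Schmidt case and then controls the second moment by a graph-sum computation. Since $\eps_N$ is a polynomial of uniformly bounded degree in matrices and coefficients of uniformly bounded operator norm, the operator norm of $\eps_N$ is almost surely bounded by a deterministic constant $C_0$. Combined with the pointwise bound $|\eps_N(i,i)|^{2p}\leq C_0^{2p-2}|\eps_N(i,i)|^2$ available because $\eps_N$ is diagonal, it suffices to prove the $p=1$ case,
$$\esp\Bigl[\tfrac{1}{N}\Tr(\eps_N\eps_N^*)\Bigr]=\tfrac{1}{N}\sum_{i=1}^N\esp|\eps_N(i,i)|^2\longrightarrow 0.$$

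For this, write $B_{N,j}:=P_{N,j}(\mbf A_{N,\ell_j})-\Delta P_{N,j}(\mbf A_{N,\ell_j})$, which vanishes on the diagonal, and expand
$$\eps_N(i,i)=\sum_{i=i_0\neq i_1\neq\cdots\neq i_{n-1}\neq i_n=i}\prod_{j=1}^nB_{N,j}(i_{j-1},i_j),$$
then further expand each $B_{N,j}(a,b)$, for $a\neq b$, as a product of deterministic bounded diagonal coefficients and matrix entries of $\mbf A_{N,\ell_j}$. Taking expectations, independence of the families factors $\esp|\eps_N(i,i)|^2$ across the families $\ell=1,\ldots,L$; permutation invariance of each family implies that each factor is a function of the equality pattern of the indices it involves. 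Grouping by equality pattern expresses $\sum_i\esp|\eps_N(i,i)|^2$ as a finite sum over coloured combinatorial graphs $(G,\chi)$, with $G$ encoding the equality pattern of all indices and $\chi$ assigning a family to each edge. Each term is a product of bounded deterministic coefficient factors, the number $N^{|V(G)|}(1+o(1))$ of ways to label $G$ by distinct indices, and a product of permutation-invariant injective moments, one per family.

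A Cauchy--Schwarz argument against indicator vectors of the partition blocks, combined with the uniform operator-norm bound, gives for each family $\ell$ an estimate $O(N^{-r_\ell(G)})$ for its injective moment, with an explicit graph-theoretic exponent satisfying $r_\ell(G)\geq v_\ell-c_\ell$, where $v_\ell$ and $c_\ell$ are the vertex and component counts of the family-$\ell$ sub-graph of $G$. Multiplying, the contribution of each graph $(G,\chi)$ to $\sum_i\esp|\eps_N(i,i)|^2$ is $O(N^{|V(G)|-\sum_\ell r_\ell(G)})$, so the desired conclusion reduces to the strict combinatorial inequality $|V(G)|<1+\sum_\ell r_\ell(G)$ for every graph arising in the expansion.

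The main obstacle is this last graph-theoretic step, which is the analytic manifestation of the fact that traffic independence implies freeness with amalgamation over the diagonal. Both the vanishing diagonal of each $B_{N,j}$ (which forbids the endpoints of each single-family chain in $G$ from coinciding) and the alternation $\ell_j\neq\ell_{j+1}$ (which forces adjacent chains to carry different colours) are indispensable: either one alone fails to produce the strict inequality. I would prove it by induction on $n$ via a peeling argument on the length-$n$ cycle at $i$ defining $\eps_N(i,i)$: identify an extremal vertex whose incident edges all belong to a single family, and use the diagonal centering of the two neighbouring chains to contract it, reducing to a strictly shorter alternating centered product with one additional forced index coincidence that the inductive hypothesis absorbs.
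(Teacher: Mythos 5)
Your overall strategy mirrors the paper's: reduce the Schatten $p$-norm statement to the $p=1$ case, expand $\eps_N(i,i)$ in matrix entries, group terms by the equality pattern of the summation indices to obtain a finite sum over coloured graphs, bound each family's injective moment, and win by a combinatorial inequality on the resulting graph. Your reduction to $p=1$ via the pointwise bound $|\eps_N(i,i)|^{2p}\leq C_0^{2p-2}|\eps_N(i,i)|^2$ is a valid (and shorter) alternative to the paper's Lemma~\ref{MainLem0}, though note the paper's version is stated so as to work also under the relaxed hypotheses of Proposition~\ref{prop:boundsum}, where there is no almost-sure operator norm bound available.

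The concrete gap is in your injective-moment estimate. You assert that Cauchy--Schwarz against indicators plus the norm bound gives $O(N^{-r_\ell(G)})$ with $r_\ell(G)\geq v_\ell-c_\ell$. This inequality goes the wrong way. The sharp exponent under a uniform operator norm bound (Mingo--Speicher, which is the paper's Lemma~\ref{lem:mingo} via Theorem 6 of \cite{Mingo2012}) is $r_\ell(G)=v_\ell-\mathfrak f(G_\ell)/2$, where $\mathfrak f(G_\ell)$ is the number of leaves of the forest of two-edge connected components of the family-$\ell$ sub-graph, and one always has $\mathfrak f(G_\ell)\geq 2c_\ell$, hence $r_\ell(G)\leq v_\ell-c_\ell$, with equality only when every component's forest is a single vertex or a path. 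A single-family sub-graph that is a $3$-edge star already violates your claim: it has $v_\ell-c_\ell=3$ while $\mathfrak f=3$ gives $r_\ell=4-\tfrac32=\tfrac52<3$, and this is achieved by a deterministic bounded-norm matrix supported on one row-and-column conjugated by a uniform permutation. Because of this, the combinatorial inequality you reduce to, $|V(G)|<1+\sum_\ell r_\ell(G)$, does \emph{not} become ``the bipartite graph of coloured components is not a tree''; with the true exponents it instead requires controlling the excess $\sum_\ell(\mathfrak f_\ell/2-c_\ell)\geq 0$, which can be arbitrarily large for an arbitrary graph. The paper handles precisely this in the proof of Lemma~\ref{MainLem2}: after establishing ``not a tree'' (Lemma~\ref{MainLem21}, via your observation that diagonal centering and colour alternation force a genuine backtrack), it runs a pruning argument on the graph of coloured components, using two-edge-connectivity of the quotient cycle $T^\pi$ to bound $\mathfrak f(S)$ by the degree of $S$ in the pruned graph for each surviving coloured component $S$. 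Your final peeling sketch does not substitute for this: without the correct $\mathfrak f$-exponent the intermediate inequality you would be inducting on is the wrong one, and the argument does not close.
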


This convergence implies the convergence to zero in probability of the Schatten p-norms of $\eps_N$ as $N$ tends to infinity. Thus, independent large permutation invariant matrices are asymptotically free over the diagonal in probability in $(\mathcal{M}_N,\mathcal{D}_N,\Delta)$: 
for  a  typical  realization  of $\mbf A_{N,1} \etc \mbf A_{N,L}$, the  operator-valued distribution $\mbf A_{N,1}\cup \ldots \cup \mbf A_{N,L}$
is  close  to  the  distribution  of  copies  of  $\mbf A_{N,1} \etc \mbf A_{N,L}$ which  are  free  with  amalgamation  over $\mathcal{D}_N$.

In Theorem~\ref{MainTh} we first strengthen this result for the operator-valued space of random matrices generated by the $\mbf A_{N,\ell}$. In Section~\ref{Sec:Numerical} we also weaken the assumption of invariance by permutations: our results also apply if each matrix of $\mbf A_\ell^{(N)}$ is multiplied entrywise by bounded random variables.

\subsection{Numerical validation}

Free harmonic analysis in operator-valued spaces and Theorem \ref{MainTh} introduce new perspectives for the analysis of large random matrices. In particular one can use the numerical method  of Belinschi, Mai and Speicher \cite{Belinschi2017} to calculate the limiting empirical spectral distribution of rational functions random matrices.

We illustrate our result by computing the limiting distribution of the sum of two independent Hermitian matrices of various models: GUE matrices with variance profile, adjacency matrices of Erd\H{o}s-R\'enyi graphs, adjacency matrices of percolation on the cycle, diagonal matrices conjugated by a unitary Brownian motion or by the Fast Fourier Transform matrix. 

The rest of the article is organized as follows. In Section~\ref{Sec:Results}, we state our main results: Theorem~\ref{MainTh} and its generalizations Proposition~\ref{prop:boundsum} and Proposition~\ref{Prop:Hadamard}. In Section~\ref{Sec:Numerical}, we present an algorithm to compute the eigenvalue distribution of the sum of independent permutation invariant random matrices, and apply it to various models. Finally, Section~\ref{Sec:proofs} contains the proofs of the different results of Section~\ref{Sec:Results}.

\section{Statements of results}\label{Sec:Results}

\subsection{Freeness with amalgamation of large random matrices}
We now consider the operator-valued probability space of random matrices over a probability space. We denote it $(\mathcal{M}_N(L^{\infty}),\mathcal{D}_N(L^{\infty}),\Delta)$, where $\mathcal{M}_N(L^{\infty})$ is the space of $N\times N$ matrices of bounded random variables, and $\mathcal{D}_N(L^{\infty})$ is the space of $N\times N$ diagonal matrices of bounded random variables. In this context, $\mathcal{D}_N(L^{\infty})\langle X_k:k\in K\rangle$ is the space of monomials of \emph{random} diagonal matrices.

The minimal setting in order to formalize Theorem~\ref{FirstTh} in $(\mathcal{M}_N(L^{\infty}),\mathcal{D}_N(L^{\infty}),\Delta)$ requires the following definition. 

\begin{Def} Let $\mbf A_N$ be a family of random matrices. We define $\mcal A_N$ to be the smallest unital subalgebra containing the matrices of $\mbf A_N$ that is closed under $\Delta$, and we denote $\mathcal{B}_N = \Delta(\mcal A_N)$.
\end{Def}

Remark that $\mcal A_N=\mathcal{B}_N\langle \mbf A_N \rangle$, and $\mathcal{B}_N$ is the smallest subalgebra of $\mathcal{D}_N(L^{\infty})$ such that $\Delta\big(\mathcal{B}_N\langle \mbf A_N \rangle\big)\subset \mathcal{B}_N$. The triplet $(\mcal A_N, \mcal B_N, \Delta)$ is an operator-valued probability space. In order to formulate a notion of asymptotic freeness with amalgamation over $\mcal B_N$, the coefficients of the polynomials should be consistent as the dimension $N$ grows. The way we shall encode the coefficients independently of the dimension is by the following notion of graph monomial.

In this article, a {\em graph monomial} $g$  is the data of a finite, bi-rooted, connected and directed graph $G=(V,E,v_{in}, v_{out})$ and edge labels $\ell:E\to \{1,\ldots,L\}$ and $k:E\to K$. We allow multiplicity of edges and loops in our graphs. The roots $v_{in}, v_{out}$, are elements of the vertex set $V$, which we term the input and the output respectively. The roots are allowed  to be equal. The labels $\ell$ and $k$ allows to assign a matrix $A_{N,\ell(e)}^{(k(e))}$ to each edge $e\in E$.

\begin{figure}[h!]
 \centering
  \includegraphics[width=140pt]{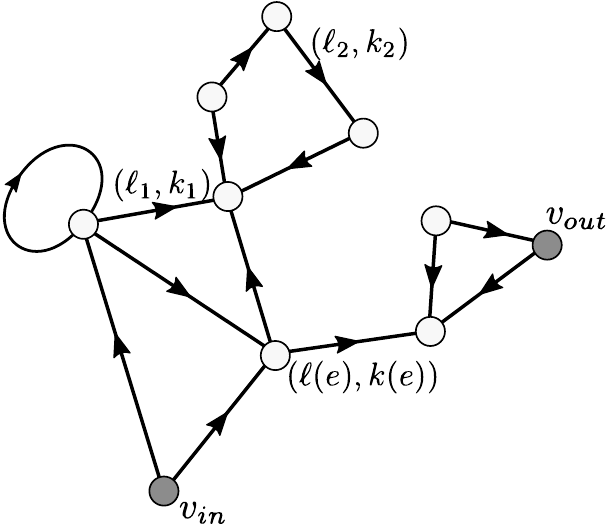}
 \caption{A graph mononial}
 \label{ex1}
\end{figure}

The evaluation of the graph monomial $g$ in the family of matrices $\mbf A_N = \mbf A_{N,1} \cup \dots \cup \mbf A_{N,L}$ is the $N\times N$ matrix $g(\mbf A_N)\in \mathcal{M}_N(L^{\infty})$  given by
$$g(\mbf A_N)(i,j):=\sum_{\substack{\phi:V\to [N]\\\phi(v_{in})=j, \phi(v_{out})=i }}\prod_{e=(v,w)\in E}A_{N,\ell(e)}^{(k(e))}(\phi(w),\phi(v)).$$
In Lemma \ref{Lem:FNAN}, we prove that $\mcal B_N$ is spanned by the matrices $g(\mbf A_N)$ where the graph of $g$ is a so-called cactus-type monomial, i.e. a graph monomial whose underlying graph is a \emph{cactus} and whose input and output are equal.

\begin{Th}\label{MainTh}  Let $\mbf A_{N,1}=(A_{N,1}^{(k)})_{k\in K} \etc \mbf A_{N,L}=(A_{N,L}^{(k)})_{k\in K}$ be independent families of random matrices that are uniformly bounded in operator norm. Assume moreover that each family, except possibly one, is permutation invariant. We write $\mbf A_N = \mbf A_{N,1} \cup \dots \cup \mbf A_{N,L}$ for the union of the families of matrices.

Then $\mbf A_{N,1} \etc \mbf A_{N,L}$ are \emph{asymptotically free with amalgamation} in the operator-valued probability space $(\mcal A_N, \mcal B_N, \Delta)$ in the following sense. Let $n\geq 2$, $\ell_1\neq \ell_2\neq \dots \neq \ell_n$ and $P_{N,1},\ldots,P_{N,n} \in \mathcal{B}_N\langle X_k:k\in K\rangle$ which are explicitly given by
$$P_{N,i}=g_{0,i}(\mbf A_N)X_{k_i(1)}g_{1,i}(\mbf A_N)\cdots X_{k_i(d_i)} g_{d_i,i}(\mbf A_N),$$
where each $g_{j,i}$ is a cactus-type monomial (which does not depend on $N$). Then, the matrix
	$$\eps_N := \Delta \left[ \big( P_{N,1}(\mbf A_{N,\ell_1}) - \Delta P_{N,1}(\mbf A_{N,\ell_1}) \big)  \cdots \big( P_{N,n} (\mbf A_{N,\ell_n})- \Delta  P_{N,n} (\mbf A_{N,\ell_n}) \big) \right]$$
 converges to zero in Schatten $p$-norm for all $p\geq 1$. Namely, for any $p\geq 1$, we have $$ \esp\left[ \frac 1 N \Tr  \big[(\eps_N\eps_N^*)^p\big] \right] \limN 0.$$
\end{Th}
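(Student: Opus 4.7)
The strategy is the moment method in the graph-monomial formalism. The aim is to show
\[
\esp\!\left[\tfrac{1}{N}\Tr\big((\eps_N\eps_N^*)^p\big)\right] = O(N^{-1})
\]
for every integer $p\geq 1$, which is exactly the stated conclusion (with Schatten-$p$-norm convergence for real $p$ following from monotonicity of normalized Schatten norms in $p$ together with the uniform operator-norm bound).

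First, I would translate $\eps_N$ into graph-monomial language. Each coefficient $g_{j,i}(\mbf A_N)$ is already a graph-monomial evaluation; substituting it together with "variable" edges corresponding to the $X_{k_i(s)}$ (all labeled by $\ell_i$) yields a finite directed bi-rooted graph $\Gamma_i$ with $P_{N,i}(\mbf A_{N,\ell_i}) = \Gamma_i(\mbf A_N)$. The inner centering $P_{N,i} - \Delta P_{N,i}$ amounts to restricting to colorings in which the two roots of $\Gamma_i$ are assigned distinct values, while the outer $\Delta$ and the trace closure identify the leftmost and rightmost vertices cyclically. Thus $\eps_N$ becomes an indexed sum over vertex colorings of a single connected graph $\Gamma_\eps$ with distinctness constraints inherited from the centerings. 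The cactus-type hypothesis on the $g_{j,i}$ (combined with Lemma \ref{Lem:FNAN}) makes this substitution internally coherent: every cycle in a $g_{j,i}$ remains rooted at a boundary vertex and cannot be "opened" by an external identification of vertices.

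Expanding $\tfrac{1}{N}\Tr((\eps_N\eps_N^*)^p)$ then produces a sum over colorings $\phi:V(\Gamma^{(p)})\to[N]$ of a doubled graph $\Gamma^{(p)}$ obtained by concatenating $2p$ copies of $\Gamma_\eps$ with alternating orientation. Grouping colorings by the partition $\pi$ they induce on $V(\Gamma^{(p)})$, and using independence across $\ell$ together with permutation invariance of each family, one has
\[
\esp\!\left[\tfrac{1}{N}\Tr\big((\eps_N\eps_N^*)^p\big)\right] = \tfrac{1}{N}\sum_{\pi} N^{(|\pi|)}\,\tau_N^{\mrm{inj}}(\Gamma^{(p)}/\pi),
\]
where $\tau_N^{\mrm{inj}}(\cdot)$ is an injective-trace weight depending only on the quotient $\Gamma^{(p)}/\pi$ and on the joint injective distribution of the families. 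The uniform operator-norm bound (together with a Cauchy--Schwarz estimate for sums of matrix entries along an injective coloring) controls $|\tau_N^{\mrm{inj}}(\Gamma^{(p)}/\pi)|=O(1)$ uniformly in $N$, so each partition $\pi$ contributes at most $O(N^{|\pi|-1})$.

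The crux is a combinatorial lemma: every partition $\pi$ compatible with the distinctness constraints attached to $\eps_N$ must satisfy $|\pi|\leq V(\Gamma^{(p)})-1$. The mechanism is the one behind asymptotic freeness of permutation invariant matrices in the scalar trace-state setting: the alternation $\ell_1\neq\ell_2\neq\cdots\neq\ell_n$ together with the inner centerings rules out the "factorizing" configurations within each $\Gamma_i$, and the cactus structure of the coefficients contributes only loop-like diagonal decorations anchored at the boundary vertices. The main obstacle, and the genuinely new difficulty compared to the scalar setting, is controlling the interaction of the diagonal coefficients across different $\ell_i$: since $g_{j,i}(\mbf A_N)$ may depend on every family simultaneously, one must show that the cactus-type structure prevents them from building spurious "bridges" identifying variable subgraphs belonging to distinct $\ell_i$. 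Once this localization is established, the alternation together with the centerings deliver the codimension-one bound $|\pi|\leq V(\Gamma^{(p)})-1$, and the estimate $\esp[\tfrac{1}{N}\Tr((\eps_N\eps_N^*)^p)] = O(N^{-1})$ follows.
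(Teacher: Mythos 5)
Your proposal correctly identifies the overall skeleton (moment method, test graphs, injective traces, exploiting the inner centerings to kill tree-like configurations), but it goes wrong at the key quantitative estimate, which is the real content of the theorem.

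You claim that the operator-norm bound gives $|\tau_N^{\mathrm{inj}}(\Gamma^{(p)}/\pi)| = O(1)$ uniformly, so each partition contributes $O(N^{|\pi|-1})$, and that the "crux" is the combinatorial bound $|\pi| \leq |V(\Gamma^{(p)})|-1$. This does not work. The bound $|\pi|\leq |V(\Gamma^{(p)})|-1$ is satisfied by almost every nontrivial partition and yields only $O(N^{|V|-2})$, which diverges. More fundamentally, an entrywise bound $|A(i,j)|\leq \|A\|$ on the matrix entries is far too weak: the expected product of entries along an injective coloring, multiplied by the $\sim N^{|\pi|}$ count of such colorings, does not self-normalize. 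The paper's proof instead hinges on the sharp trace estimate of Mingo and Speicher (Theorem 6 of \cite{Mingo2012}): for a graph monomial $g$ with equal input and output, $\Tr\,g(\mbf A) \leq N^{\mathfrak f(g)/2}\prod_e\|A_e\|$, where $\mathfrak f(g)$ is the number of leaves of the forest of two-edge connected components of the underlying graph. This bound, applied to each colored component of $T^\pi$ after factoring by independence, is what produces the genuine decay. The correct combinatorial criterion is not a cardinality bound on $\pi$ but the statement that the bipartite graph of colored components $\mcal{GCC}(T^\pi)$ is not a tree (Lemma \ref{MainLem21}), and the $O(N^{-1})$ estimate then comes from a careful Euler-characteristic / degree-counting argument on the pruned $\mcal{GCC}(T^\pi)$ combined with $\sum_\ell \mathfrak f(T^\pi_\ell)/2$ (Lemma \ref{MainLem2}). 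Without $\mathfrak f$, your bookkeeping of powers of $N$ does not close.

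Two smaller points. First, the paper avoids ever expanding $(\eps_N\eps_N^*)^p$ directly: since $\eps_N^*(\eps_N\eps_N^*)^{p-1}\in\mcal B_N$ and $\Delta$ is a $\mcal B_N$-bimodule map, this factor is absorbed into the last coefficient $g_{d_n,n}$ of $P_{N,n}$, reducing everything to a single linear moment $\esp[\tfrac1N\Tr\,\eps_N]$ (Lemma \ref{MainLem0}). Your $2p$-fold concatenation would also eventually work, but it forces you to re-derive the cancellation lemma in a doubled setting for no gain. Second, the theorem allows one family to fail permutation invariance; the paper handles this by conjugating all families by a common independent uniform permutation, which fixes the joint law and restores invariance of the exceptional family. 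Your argument assumes full permutation invariance and would need this extra reduction to cover the stated hypothesis.
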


 Note that there is no assumption of convergence of the $\Delta$-distribution of the matrices, as it is commonly assumed in the context of \emph{deterministic equivalents} \cite[Chapter 10]{Mingo2017}. 
\subsection{Generalizations}

In this section, we present some generalizations of Theorem \ref{MainTh}. First, we relax the bounded operator norm assumption.

\begin{Prop}\label{prop:boundsum}
 The conclusion of Theorem \ref{MainTh} remains valid if the operator norm bound assumption is replaced by the following weaker assumption: for any $\ell \in \{1,...,L\}$, any graph monomials $g_1,\ldots,g_n$ with equal input and output, 
\begin{equation}
\mathbb{E}\left[\prod_{i=1}^n \Tr \big( g_i(\mbf A_{N,\ell}) \big)\right]= O\big(N^{\sum_{i=1}^n\mathfrak f(g_i)/2}\big),\label{eq:MSbound}\end{equation}
where $\mathfrak f(g_i)\geq 2$ is determined from the forest $F$ of two-edge connected components of the graph underlying $g_i$ (Definition \ref{Def:TEC}). 
\end{Prop}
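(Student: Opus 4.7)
The plan is to revisit the proof of Theorem~\ref{MainTh} and verify that the uniform operator norm bound enters only through graph-sum estimates of the exact form assumed in~(\ref{eq:MSbound}), so that replacing the derived bound by the hypothesis produces the same conclusion.

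First I would expand the quantity $\frac{1}{N}\esp\!\left[\Tr\bigl((\eps_N\eps_N^*)^p\bigr)\right]$ combinatorially. Each factor $P_{N,i}(\mbf A_{N,\ell_i}) - \Delta P_{N,i}(\mbf A_{N,\ell_i})$ is a centered polynomial whose coefficients are cactus-type graph monomials in $\mbf A_N$, and after fully expanding we obtain a sum over pairs of graph monomials indexed by the data of which family $\ell \in \{1,\dots,L\}$ is attached to each edge. Two operations simplify this sum: independence of the families $\mbf A_{N,1},\dots,\mbf A_{N,L}$ factorizes each expectation into a product over $\ell$ of graph-sum quantities depending only on $\mbf A_{N,\ell}$; and permutation invariance of each family (except possibly one) collapses the sum over injections $\phi : V \to [N]$ into a sum depending only on the partition of $V$ induced by $\phi$, which can be encoded by an identification of vertices in the underlying graph.

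Second, I would locate each step in the proof of Theorem~\ref{MainTh} where the operator norm bound was invoked. In each case the bound is used, via the classical graph-sum estimate \cite{MalePeche14}, only to ensure that for any graph monomials $g_1,\dots,g_n$ with equal input and output one has
$$\esp\!\left[\prod_{i=1}^n \Tr\bigl(g_i(\mbf A_{N,\ell})\bigr)\right] = O\!\left(N^{\sum_{i=1}^n \mathfrak f(g_i)/2}\right).$$
Since this is exactly what~(\ref{eq:MSbound}) asks for as a hypothesis, one simply substitutes the assumed bound for the derived one. The combinatorial heart of the argument, which identifies the graphs yielding the dominant contribution and shows that the centering kills every such contribution so that what remains is of order $o(1)$, does not use the operator norm at all and therefore transfers directly. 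The Schatten $p$-norm convergence then follows for every integer $p$, and for every real $p \geq 1$ by interpolation with the trivial $p=\infty$ case (which here must be avoided, so one interpolates between two integer values).

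The main obstacle is bookkeeping: one must check carefully that every occurrence of the operator norm bound in the proof of Theorem~\ref{MainTh}, including those hidden in the auxiliary lemmas describing $\mcal B_N$ and the cactus decomposition (notably Lemma~\ref{Lem:FNAN}), can be written as a finite linear combination of estimates of the precise form~(\ref{eq:MSbound}). The constant $\mathfrak f(g)\geq 2$ assigned to a graph monomial through its two-edge-connected-component forest is tuned so that this matches the sharp graph-sum scaling, so provided the cactus-type coefficients are treated as part of the graph $G$ rather than as external bounded operators, the substitution is legitimate and the conclusion follows.
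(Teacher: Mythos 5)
Your proposal captures the paper's approach, though stated in reverse logical direction: the paper in fact proves Proposition~\ref{prop:boundsum} directly (Sections~\ref{Sec:PrelLem}--\ref{Sec:mainproof}, where the reduction to $\tau_N^0[T^\pi]\to 0$ and then the pruning estimate of Lemma~\ref{MainLem2} via Lemma~\ref{lem:mingo} use only the hypothesis~\eqref{eq:MSbound}), and then derives Theorem~\ref{MainTh} as a corollary by invoking the Mingo--Speicher inequality to deduce~\eqref{eq:MSbound} from the operator norm bound. Your key observation --- that the norm bound is never used except to establish~\eqref{eq:MSbound}, and that the cactus-type coefficients must be internalized into the graph monomials so the hypothesis applies to them --- is exactly right; the only slips are that the cited graph-sum estimate is Theorem 6 of Mingo--Speicher \cite{Mingo2012} rather than \cite{MalePeche14}, and Lemma~\ref{Lem:FNAN} is purely algebraic, so there is no hidden norm bound there to worry about.
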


In particular, the conclusion of Proposition \ref{prop:boundsum} holds if
$$\mathbb{E}\left[\prod_{i=1}^n \Tr \big( g_i(\mbf A_{N,\ell}) \big)\right]= O\big(N^n\big),$$ which is the boundedness of the so-called traffic distribution \cite{Male2011}. For example, this holds for Wigner matrices with exploding moments as the matrix of the Erd\H{o}s-R\'enyi graph (see for instance \cite[Proposition 4.1]{Male17}), which do not satisfy the bounded operator norm property \cite[Proposition 12]{Zakharevich2006}.

The integer $\mathfrak f(g)$ appearing in the above theorem has been considered by Mingo and Speicher in \cite{Mingo2012}, where they proved that \eqref{eq:MSbound} is true if $\mbf A_{N,1} \etc \mbf A_{N,L} $ are uniformly bounded in operator norms (see Section~\ref{Sec:proofmain}).
\begin{Prop} \label{Prop:Hadamard} 
Let $\mbf A_{N,1} \etc \mbf A_{N,L} $ be independent families of random matrices that are uniformly bounded in operator norms, or which satisfies the bound \eqref{eq:MSbound}.  Assume moreover that each family is permutation invariant. Let $\big( \Gamma_{\ell}^{(k)} \big)_{\ell,k}$ be a family of random matrices with uniformly bounded entries, independent of $(\mbf A_{N,1} \etc \mbf A_{N,L})$. 
 Then the conclusion of Theorem \ref{FirstTh} and the conclusion of Theorem \ref{MainTh} remains true for the family $\tilde{\mbf A}_{N,1} \etc \tilde{\mbf A}_{N,L}$, where
	$$\tilde{\mbf A}_{N,\ell} = \left( A_{N,\ell}^{(k)} \circ \Gamma_{\ell}^{(k)} \right)_{k \in K},$$
and $\circ$ denotes the entrywise product.\end{Prop}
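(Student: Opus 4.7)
The plan is to reduce Proposition \ref{Prop:Hadamard} to Proposition \ref{prop:boundsum}. Conditioning on $\Gamma$ (which is independent of $\mbf A$ and uniformly bounded), it suffices to treat $\Gamma$ as deterministic with $C := \sup_{\ell,k}\|\Gamma_\ell^{(k)}\|_\infty$; dominated convergence then recovers the full statement, since the bounds I obtain will depend on $\Gamma$ only through $C$.

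The key step is to verify the graph trace bound \eqref{eq:MSbound} for the modified families $\tilde{\mbf A}_{N,\ell} = (A_{N,\ell}^{(k)} \circ \Gamma_\ell^{(k)})_k$. Expanding,
\[
\mathbb{E}\Bigl[\prod_{i=1}^n \Tr\bigl(g_i(\tilde{\mbf A}_{N,\ell})\bigr)\Bigr] = \sum_{\phi_1,\ldots,\phi_n}\Bigl(\prod_{i,e}\Gamma_\ell^{(k(e))}(\phi_i)\Bigr)\,\mathbb{E}\Bigl[\prod_{i,e}A_{N,\ell}^{(k(e))}(\phi_i)\Bigr].
\]
Grouping the labelings by the partition $Q$ they induce on the disjoint union of vertex sets, permutation invariance of $\mbf A_\ell$ makes the $\mbf A$-expectation $\alpha_Q$ depend only on $Q$, while the $\Gamma$-factor is bounded in modulus by $C^{\text{total edges}}$ uniformly in $\phi$. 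The Mingo--Speicher bound \cite{Mingo2012} (or the assumed \eqref{eq:MSbound}), applied classwise, controls each $|\alpha_Q|\cdot|\{\phi \sim Q\}|$ by $O(N^{\sum_i \mathfrak{f}(g_i)/2})$; summing over the finite collection of classes yields \eqref{eq:MSbound} for $\tilde{\mbf A}_{N,\ell}$.

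Next, since $\tilde{\mbf A}$ does not generally inherit permutation invariance from $\mbf A$, I would restore this symmetry by introducing independent uniform random permutation matrices $P_1,\ldots,P_L$, independent of $(\mbf A,\Gamma)$, and setting $\hat{\mbf A}_{N,\ell}:=(P_\ell\tilde A_{N,\ell}^{(k)}P_\ell^{-1})_k$. Each $\hat{\mbf A}_{N,\ell}$ is permutation invariant (for any deterministic $\sigma$, $\sigma P_\ell$ is again uniform and independent of $\tilde{\mbf A}_{N,\ell}$), the families are independent across $\ell$ (the $\tilde{\mbf A}_\ell$ and the $P_\ell$ being so), and the graph trace bound persists since traces are invariant under conjugation. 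Proposition \ref{prop:boundsum} now applies to $(\hat{\mbf A}_{N,\ell})_\ell$ and yields their asymptotic freeness over $\mathcal{D}_N$.

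The hardest part will be transferring this conclusion from $\hat{\mbf A}$ back to $\tilde{\mbf A}$. For a cactus-type coefficient $g$ involving only one family $\ell$, the equivariance $g(\hat{\mbf A}_\ell)=P_\ell g(\tilde{\mbf A}_\ell)P_\ell^{-1}$ ensures that $\Delta g(\hat{\mbf A}_\ell)$ and $\Delta g(\tilde{\mbf A}_\ell)$ have the same Schatten $p$-norm; for cactus coefficients mixing several families, however, the distinct conjugations $P_\ell$ do not combine into a single equivariance. I expect to handle this by expanding $\mathbb{E}\bigl[\tfrac{1}{N}\Tr(\eps_N(\tilde{\mbf A})\eps_N(\tilde{\mbf A})^*)^p\bigr]$ directly as a sum of traces of graph monomials in $(\tilde{\mbf A},\Gamma)$ and applying the same classwise Mingo--Speicher-type estimate as in the first step; the random conjugations $P_\ell$ then serve essentially as a book-keeping device legitimizing the use of permutation invariance at the level of the combinatorial counting, rather than as an actual modification of the matrices whose freeness we seek.
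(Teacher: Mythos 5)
Your overall plan has a genuine gap, and you have correctly identified where it is: the transfer from the symmetrized family $\hat{\mbf A}_{N,\ell}=P_\ell\tilde{\mbf A}_{N,\ell}P_\ell^{-1}$ back to $\tilde{\mbf A}_{N,\ell}$. The cactus-type coefficients $g_{j,i}(\mbf A_N)$ in Theorem~\ref{MainTh} involve the union of the families, so the $L$ independent conjugations $P_1,\ldots,P_L$ do not intertwine $g_{j,i}(\hat{\mbf A}_N)$ with $g_{j,i}(\tilde{\mbf A}_N)$ by any single permutation, and the Schatten norms of the two error matrices $\eps_N$ are a priori unrelated. Your proposed escape hatch --- ``expand directly as a sum of traces of graph monomials in $(\tilde{\mbf A},\Gamma)$ and apply a classwise estimate'' --- is not worked out, and if you actually carried it through you would bypass both the conjugation and the verification of \eqref{eq:MSbound} for $\tilde{\mbf A}$, making the first two steps of your plan superfluous.

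The paper's route is much shorter and avoids the whole issue of making $\tilde{\mbf A}_N$ permutation invariant. The reduction in Section~\ref{Sec:PrelLem} (Lemmas~\ref{MainLem0}, \ref{MainLem1}, \ref{MainLem21}) is purely algebraic and does not use any invariance, so it already reduces the claim to showing that $\tau_N^0\big[T^\pi(\tilde{\mbf A}_N)\big]\to 0$ whenever $\mcal{GCC}(T^\pi)$ is not a tree. For a fixed injective $\phi:V^\pi\to[N]$, independence of $\mbf A_N$ and $\Gamma$ factors each summand as $\esp\big[\prod_e A_{\ell(e)}^{(k(e))}(\phi)\big]\cdot\esp\big[\prod_e\Gamma_{\ell(e)}^{(k(e))}(\phi)\big]$. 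Crucially, permutation invariance of $\mbf A_N$ (the original family, not $\tilde{\mbf A}_N$) implies the first factor is the \emph{same} for every injective $\phi$, so it can be pulled out of the sum and recognized, via \eqref{eq:tau0_1}, as $\tau_N^0\big[T^\pi(\mbf A_N)\big]$ times the normalizing factor $(N-|V^\pi|)!/N!$. What remains inside the sum is $\esp\big[\prod_e\Gamma(\phi)\big]$, whose average over the $N!/(N-|V^\pi|)!$ injective $\phi$'s is bounded by $C^{|E^\pi|}$. Since $\tau_N^0\big[T^\pi(\mbf A_N)\big]\to 0$ by Sections~\ref{Sec:mainproof} and~\ref{Sec:proofmain}, the product tends to zero. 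The deduction of the Theorem~\ref{FirstTh}-type statement then follows by appending a family $\tilde{\mbf A}_{N,L+1}=\big(I_N\circ\Gamma_{L+1}^{(k)}\big)_k$ of deterministic diagonal coefficients, exactly as in Section~\ref{Sec:proofFirstTh}. You should replace your steps (b) and (c) by this direct factorization; your conditioning step is then unnecessary as well, though harmless.

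As a secondary remark, your step (a) --- verifying \eqref{eq:MSbound} for $\tilde{\mbf A}_{N,\ell}$ by grouping $\phi$'s into classes $Q$ and ``applying Mingo--Speicher classwise'' --- also needs care: the Mingo--Speicher inequality bounds the \emph{signed} sum $\sum_Q\alpha_Q\,|\{\phi\sim Q\}|$, whereas after you multiply each class by a $\Gamma$-dependent factor you need control of $\sum_Q|\alpha_Q|\,|\{\phi\sim Q\}|$. This requires the per-class estimate $\tau_N^0[\hat T^\sigma]=O(N^{\mathfrak f(\hat T^\sigma)/2-c(\hat T^\sigma)})$ (Lemma~\ref{lem:mingo}) together with a comparison of $\mathfrak f$ and $c$ under quotients, none of which you spell out. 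Since the direct factorization makes this step unnecessary, I would drop it.
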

Let us emphasize that the families of matrices $\big( \Gamma_{1}^{(k)} \big)_{k} \etc \big( \Gamma_{L}^{(k)} \big)_{k} $ are not assumed to be neither independent nor permutation invariant, and so too for the $\tilde{\mbf A}_{N,1} \etc \tilde{\mbf A}_{N,L}$. In particular, Proposition \ref{Prop:Hadamard} can be applied to Wigner random matrices with variance profile \cite{Shlyakhtenko1996}.

\section{Examples and numerical validation}\label{Sec:Numerical}

In this section, we consider various models of independent permutation invariant random random matrices $X_N$ and $Y_N$. On the one hand, we construct the empirical spectral distribution of $X_N+Y_N$. One the other hand, we use the fixed point algorithm of Belinschi, Mai and Speicher \cite{Belinschi2017} to compute the spectral distribution of the free convolution with amalgamation over the diagonal of $X_N$ and $Y_N$. This depends only on the separated $\Delta$-distributions of $X_N$ and $Y_N$. Our main theorem guarantees that the difference between this two pictures becomes negligible in the limit in expectation and in probability. We actually obtain the numerical evidence for realizations of large matrices that asymptotic freeness over the diagonal holds.

\subsection{Amalgamated subordination property}
 
We need the following notion to explain the fixed point algorithm. Let $(\mathcal{A},\mathcal{D},\Delta)$ be an operator-valued probability space such that $\mcal A$ is a $\mcal C^*$-algebra. The operator-valued Stieltjes transform of a self-adjoint element $X$ of $\mcal A$ is the map
	$$ G_X: \Lambda \in \mcal D^+ \mapsto \Delta\big[ (\Lambda - X )^{-1}\big],$$
where $\mcal D^+$ is the set of elements $\Lambda\in \mcal D$ such that $\Im (\Lambda) := \frac{\Lambda-\Lambda^*}{2i} >0$. We also write $H_X$ for the map $H: \Lambda \mapsto G_X(\Lambda)^{-1}-\Lambda$. 

For random matrices, this map is then the diagonal of the resolvent of the matrices. Let us remark that this object was known to be an important tool in the analysis of large random matrices, for instance for random matrices with heavy tailed entries \cite{Bouchaud1994,Benarous2008,Bordenave2013,BDG}, for adjacency matrices of random graphs \cite{Khorunzhy2004}, or more recently for the local analysis of Wigner matrices with variance profile \cite{Ajanki2017}.

\begin{Th}[Theorem 2.2 of \cite{Belinschi2017}]\label{ThBel}
Two self-adjoint operator-valued random variables $X$ and $Y$ free with amalgamation over $\mathcal D$ satisfy the following \emph{subordination property}. For all $\Lambda\in \mcal D^+$, we have $G_{X+Y}(\Lambda) =G_X\big(\Omega(\Lambda)\big),$ where the subordination function $\Omega:\mathcal D^+ \to \mathcal D^+$ is the unique solution of the fixed point equation $\Omega(\Lambda)=F_\Lambda\Big( \Omega(\Lambda) \Big)$, where 
	$$F_\Lambda(\Omega) = H_Y\big(H_X (\Omega )+\Lambda \big)+\Lambda.$$
Moreover, it is the limit of the sequence $\Omega_n$ given by $\Omega_{n+1} = F_\Lambda(\Omega_n)$ for any $\Omega_0 \in \mcal D^+$.
\end{Th}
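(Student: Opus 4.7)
The plan is to combine Voiculescu's operator-valued subordination theorem, which produces the subordination function as an abstract analytic object, with a contraction argument that realizes it as the unique attracting fixed point of $F_\Lambda$. The proof proceeds in three stages: derive the fixed-point equation algebraically from the subordination identities; show $F_\Lambda$ is a well-defined holomorphic self-map of $\mcal D^+$ whose image is strictly interior; then conclude by a contraction principle in an invariant hyperbolic metric.

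First I would recall the operator-valued subordination theorem of Voiculescu (and Biane): for self-adjoint $X$ and $Y$ that are free over $\mcal D$, there exist analytic maps $\omega_1, \omega_2 : \mcal D^+ \to \mcal D^+$ with
$$G_{X+Y}(\Lambda) = G_X(\omega_1(\Lambda)) = G_Y(\omega_2(\Lambda)), \qquad \omega_1(\Lambda) + \omega_2(\Lambda) - \Lambda = G_{X+Y}(\Lambda)^{-1}.$$
Writing $F_Z = G_Z^{-1}$ and $H_Z = F_Z - \mrm{id}$, the first identity gives $F_X(\omega_1) = F_Y(\omega_2) = G_{X+Y}(\Lambda)^{-1}$, and combining this with the second yields $H_X(\omega_1) = \omega_2 - \Lambda$ and $H_Y(\omega_2) = \omega_1 - \Lambda$. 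Substituting the first into the second produces $\omega_1 = \Lambda + H_Y(\Lambda + H_X(\omega_1)) = F_\Lambda(\omega_1)$, so $\Omega := \omega_1$ is a fixed point of $F_\Lambda$ and automatically satisfies $G_{X+Y}(\Lambda) = G_X(\Omega(\Lambda))$.

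Second, I would verify that $F_\Lambda$ is a holomorphic self-map of $\mcal D^+$ whose image lies in the strict subdomain $\{\omega : \Im \omega \geq \Im \Lambda\}$. The input here is the Nevanlinna--Pick type property of operator-valued reciprocal Cauchy transforms of self-adjoint elements in a $\mcal C^*$-algebra, namely $\Im H_Z(\omega) \geq 0$ on $\mcal D^+$. Applying this with $Z = X$ then with $Z = Y$, together with $\Im \Lambda > 0$, yields the required bound $\Im F_\Lambda(\omega) \geq \Im \Lambda$.

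The hard step is the third: upgrading this strict-interior inclusion to a genuine contraction. Since $\mcal D^+$ is unbounded, neither the Banach fixed-point theorem nor the classical Schwarz--Pick lemma applies verbatim, and one must work with an intrinsic biholomorphic-invariant metric such as the Carath\'eodory--Reiffen--Finsler pseudometric. I would then invoke the Earle--Hamilton theorem for holomorphic self-maps of domains in Banach spaces whose image is bounded away from the boundary in norm; this furnishes a strict contraction in the invariant metric. A more quantitative alternative is to differentiate $F_\Lambda = H_Y \circ (H_X + \Lambda \cdot) + \Lambda$ and use the resolvent estimate $\|(\omega - Z)^{-1}\| \leq \|(\Im \omega)^{-1}\|$ to extract an explicit contraction constant in operator norm, depending only on $\|(\Im \Lambda)^{-1}\|$, $\|X\|$, and $\|Y\|$. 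Either route gives uniqueness of the fixed point and convergence of the iterates $\Omega_{n+1} = F_\Lambda(\Omega_n)$ from any $\Omega_0 \in \mcal D^+$; by uniqueness, this limit must coincide with $\Omega$, completing the proof.
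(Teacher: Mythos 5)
The paper does not prove this statement: it is quoted directly as Theorem~2.2 of Belinschi, Mai, and Speicher \cite{Belinschi2017} and used as a black box to justify the fixed-point algorithm of Section~\ref{Sec:Numerical}. There is therefore no in-paper proof to compare against; what you have written is a reconstruction of the argument from the cited reference, and its skeleton is correct: obtain the fixed-point equation $\Omega=F_\Lambda(\Omega)$ from Voiculescu's operator-valued subordination functions $\omega_1,\omega_2$, verify that $F_\Lambda$ is a holomorphic self-map of $\mcal D^+$ landing inside $\{\omega:\Im\omega\geq\Im\Lambda\}$, and close with an invariant-metric contraction principle.

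Two points in your third step need sharpening. First, the Earle--Hamilton theorem requires the image $F_\Lambda(\mcal D^+)$ to be \emph{norm-bounded} in addition to lying strictly inside $\mcal D^+$; since $\mcal D^+$ is an unbounded domain, the estimate $\Im F_\Lambda(\omega)\geq\Im\Lambda$ by itself does not suffice. The missing ingredient is that, for a bounded self-adjoint $Z$ in a $\mcal C^*$-operator-valued probability space, the map $H_Z$ is norm-bounded on every half-space $\{\omega:\Im\omega\geq\epsilon\}$ with $\epsilon>0$ (indeed $H_Z(\omega)\to -E[Z]$ as $\|\omega^{-1}\|\to 0$); chaining these bounds for $H_X$ and then $H_Y$ yields the required boundedness of $F_\Lambda(\mcal D^+)$. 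Second, the ``quantitative alternative'' you sketch --- extracting a Lipschitz constant less than one in operator norm from resolvent estimates --- does not work in general: already in the scalar semicircular case the derivative of $F_\Lambda$ scales like $\sigma^4/(\Im\Lambda)^4$, which blows up as $\Im\Lambda\to 0$. The map $F_\Lambda$ contracts the Carath\'eodory metric of $\mcal D^+$, not the norm metric, so the invariant-metric route via Earle--Hamilton is not an optional refinement but the essential mechanism.
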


Let $X$ and $Y$ be two self-adjoint operator valued random variables, free with amalgamation. Assume that the $\Delta$-distributions of $X$ and $Y$ are those of our matrices $X_N$ and $Y_N$ respectively. The following algorithm generates an approximation of the value of the spectral density $g_{X+Y}(x)$ of $X+Y$ at $x \in \mbb R$ when it exists.
\begin{itemize}
\item[\textbf{Step 1}]Simulate a realization of $X_N$ and a realization of $Y_N$.
\item[\textbf{Step 2}]Set $\Lambda = (x + i y) \mbb I_N$ where $y>0$ is small. Compute the terms of the sequence $\Omega_1(\Lambda):=\Lambda$, $\Omega_{n+1}(\Lambda):=H_{Y_N}(H_{X_N}(\Omega_n(\Lambda))+\Lambda )+\Lambda$, $\forall n\geq 1$, until the difference between $\Omega_n$ and $\Omega_{n+1}$ is under a threshold. 
\item[\textbf{Step 3}]The value of the density $g_{X+Y}(x)$ is then close to
	$$\frac 1 \pi \Im \bigg( \frac{1}{N}\Tr\left[(\Omega_n(\Lambda)-{X_N})^{-1}\right]\bigg)$$
provided $y$ is small enough and the distribution admits a density at $x$.
\end{itemize}

\begin{Rk} Let us mention two strengthening of this method.
\begin{enumerate}
	\item The amalgamated $\mcal R$-transform of $Y$ is the map $\mcal D^+ \to \mcal D^+$ characterized by $G_Y(\Lambda)= \big(  \Lambda - R_Y\big( G_{Y}(\Lambda)\big) \big)^{-1}$. The function $\Omega(\Lambda)$ of Theorem \ref{ThBel} actually equals $\Lambda - R_Y\big( G_{X+Y}(\Lambda)\big)$. The knowledge of the amalgamated $\mcal R$-function of $X$ and/or $Y$ provides faster algorithms \cite[Theorem 11 of Chapter 9]{Mingo2017} which do not require the simulation of the matrices $X$ and/or $Y$. 
 	\item Thanks to a linearization trick, the fixed point algorithm described by Belinschi, Mai and Speicher can be extended in order to compute the distribution of any non-commuting rational function in $X$ and $Y$, see \cite[Theorem 2.2]{Belinschi2017}.
\end{enumerate}
 \end{Rk}

\subsection{Matrix models}

In the rest of the section, we will present examples of different models to illustrate of our results.

\begin{enumerate}
	\item We write $GUEvp(\eta)$ for the matrix with variance profile decomposed in blocks	
		$$GUEvp(\eta) =\sqrt{\frac 8{5\eta+3}} \left( \begin{array}{cc}
						\sqrt\eta X_{11} &    X_{12} \\
							  X_{21} &  \sqrt\eta X_{22} 
							\end{array} \right),$$
	where $\eta>0$ is a parameter, $X_{11} $ has size $N/4\times N/4$ and $\left( \begin{array}{cc}
						  X_{11} &    X_{12} \\
							  X_{21} &   X_{22} 
							\end{array} \right)$
		is a GUE matrix. 
	\item We write $ER(d)$ for the standardized adjacency matrix of a sparse Erd\H{o}s-R\'enyi graph, namely
		$$ER(d) = \frac {Y_N - d \mbb J_N}{\sqrt{d(1-d/(N-1))}},$$
	where 
\begin{itemize}
	\item $Y_N$ is a real symmetric random matrix, with null diagonal, and such that the strict subdiagonal entries are i.i.d. Bernoulli random variables with parameter $d/(N-1)$,
	\item $\mbb J_N$ is the matrix whose non diagonal entries are all $1/(N-1)$ and diagonal entries are zero,
	\item $d>0$ is a parameter.
\end{itemize}
	\item We write $Perm$ for the matrix $\frac 1 {\sqrt 2}(V+V' - 2 \mbb J_N)$, where $V$ is a uniform permutation matrix, and $Perm(p)$ for the matrix $\frac 1 {\sqrt p}Perm \circ \Gamma(p)$, where $\Gamma(p)$ is a symmetric matrix, with null diagonal, and whose strict subdiagonal entries are i.d.d. Bernoulli random variables with parameter $p$.
	\item For a given diagonal matrix $D$, we write $FFT_D $ for the matrix $V U D U ^*V ^*$ where $V $ is a uniform permutation matrix and $U$ is the \emph{Fast Fourier Transform} unitary matrix $\big(\frac 1{\sqrt N} \omega e^{\frac{-i 2\pi (n-1)(m-1)}N}\big)_{nm}$. We also denote $FFT_D(p)$ for the matrix $\frac 1 {\sqrt p}FFT_D \circ \Gamma(p)$, where $\Gamma(p)$ is as in the definition of $Perm(p)$. 
	\item For a given diagonal matrix $D$, we write $UMB_D(t)$ for the matrix $U_{t,N}DU_{t,N}^*$, where $U_{t,N}$ is a unitary Brownian motion starting from a uniform permutation matrix at time $t$. We refer to~\cite{Benaych2011} for the definition of the unitary Brownian motion and the related notion of $t$-freeness.
	\item We also define the three following diagonal matrix:
	\eq
		D1& := & diag(1, -1, 1, -1 \etc 1,-1),\\
		D2& := & diag\big(\underbrace{-1\etc -1}_{N/2}, \underbrace{1\etc 1}_{N/2}\big),\\
		D3 & := & \sqrt\frac{ 3}{14} diag\Big(  -2,-2+\frac2 N, -2+\frac4N \etc -2+\frac{N-1}N, \  1+\frac2N, 1+\frac4N \etc 2 \Big).
	\qe
	\end{enumerate}

\subsection{Experiment framework and comments}
Figure \ref{Fig:NumResult} reports the result of 8 numerical simulations, for different models for $X_N$ and $Y_N$ indicated in the legend of the pictures.

\begin{figure}
\medskip
\begin{subfigure}{0.48\textwidth}
\includegraphics[width=\linewidth]{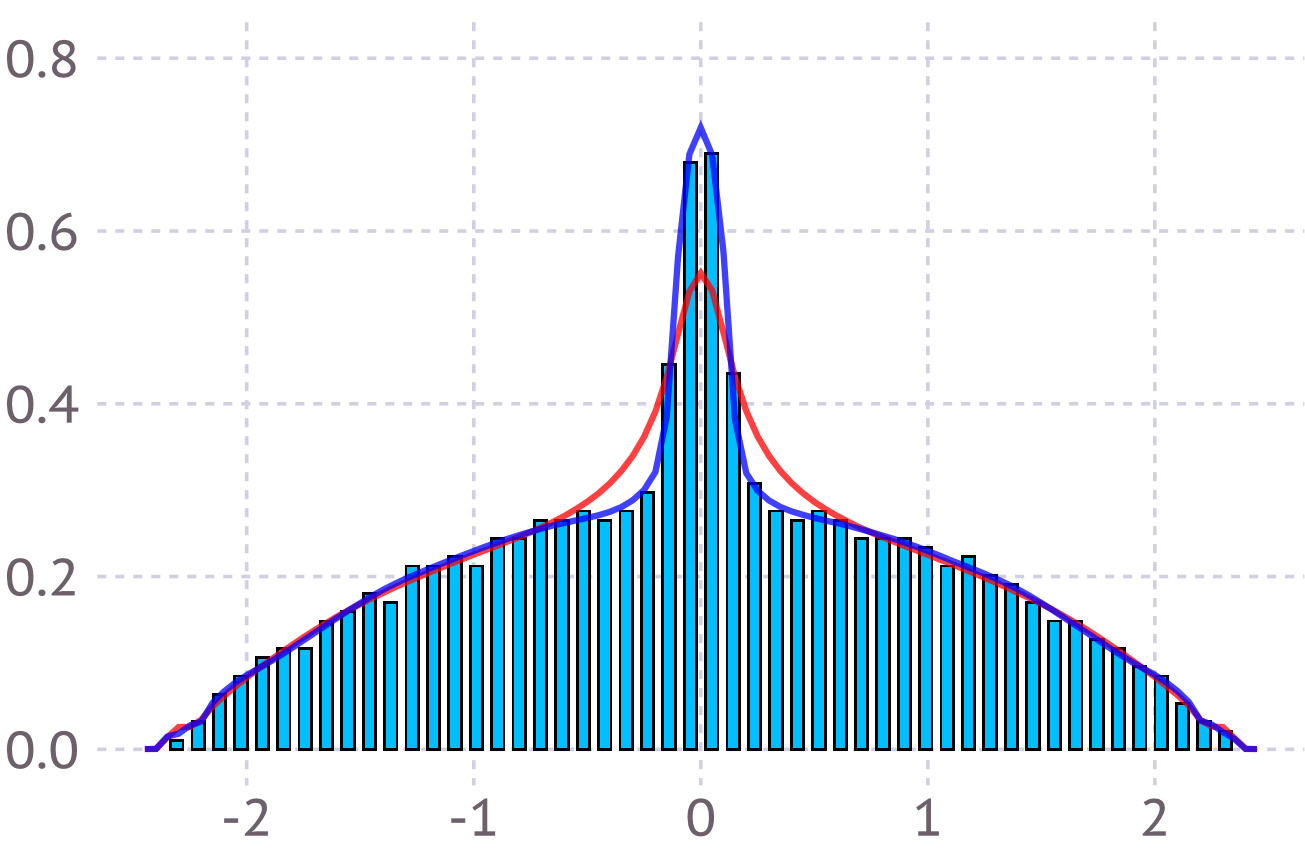}
\caption{ GUE(0.03125) and ER(1)}  
\end{subfigure}\hspace*{\fill}
  \hspace*{\fill}
\begin{subfigure}{0.48\textwidth}
\includegraphics[width=\linewidth]{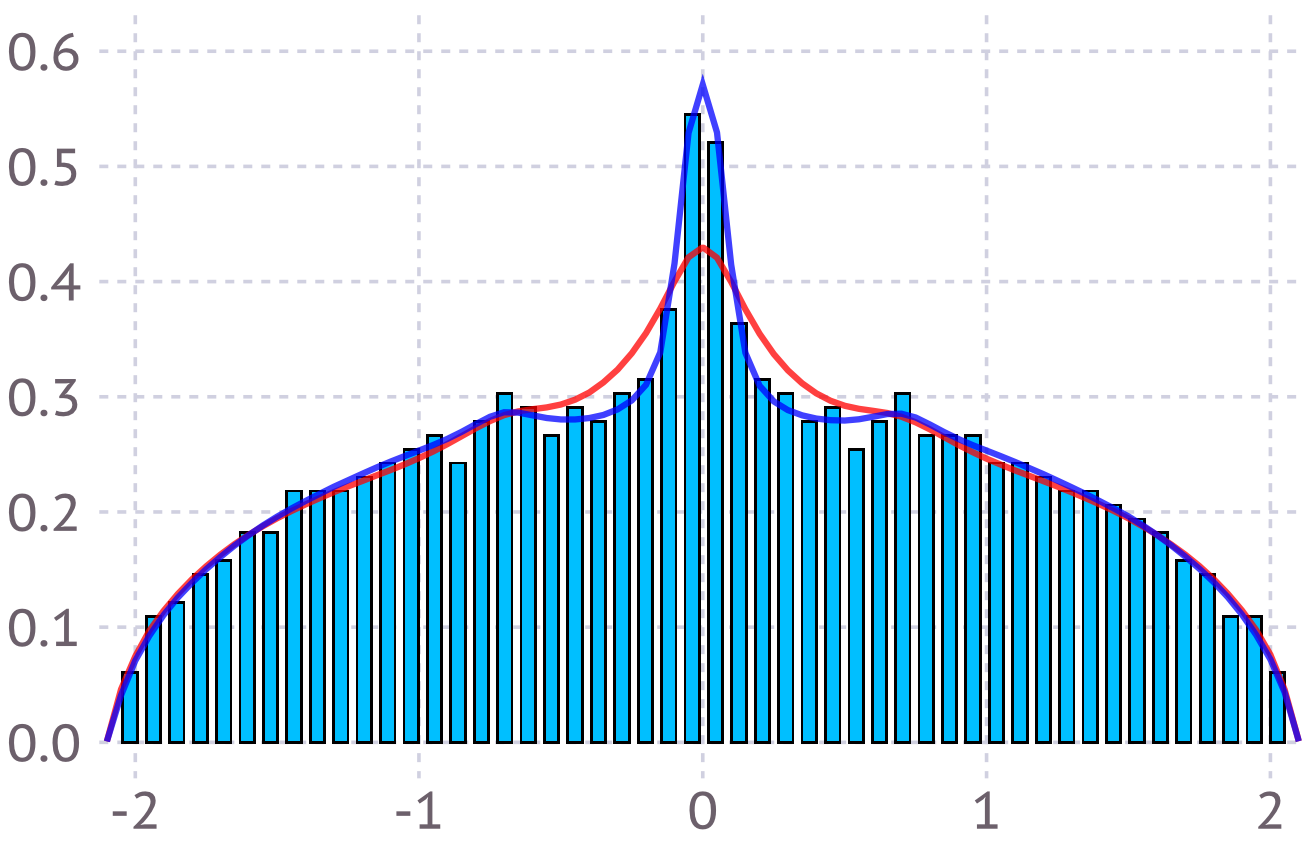}
\caption{GUE(0.03125) and Perm(0.5)} 
\end{subfigure}
\begin{picture}(0.3,0.1)
     \put(-355,63){\includegraphics[width=.45\linewidth]{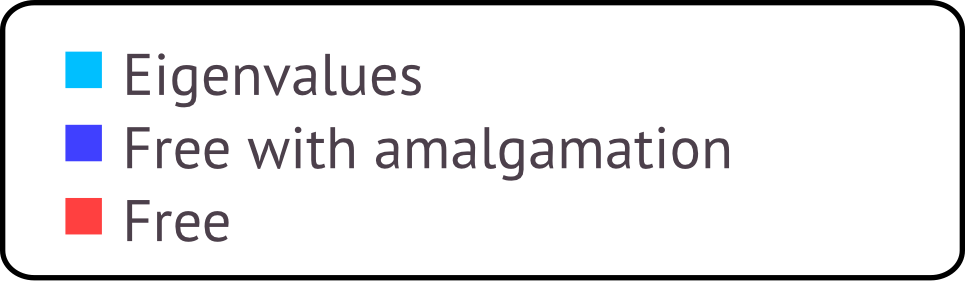}}
  \end{picture}

\medskip
\begin{subfigure}{0.48\textwidth}
\includegraphics[width=\linewidth]{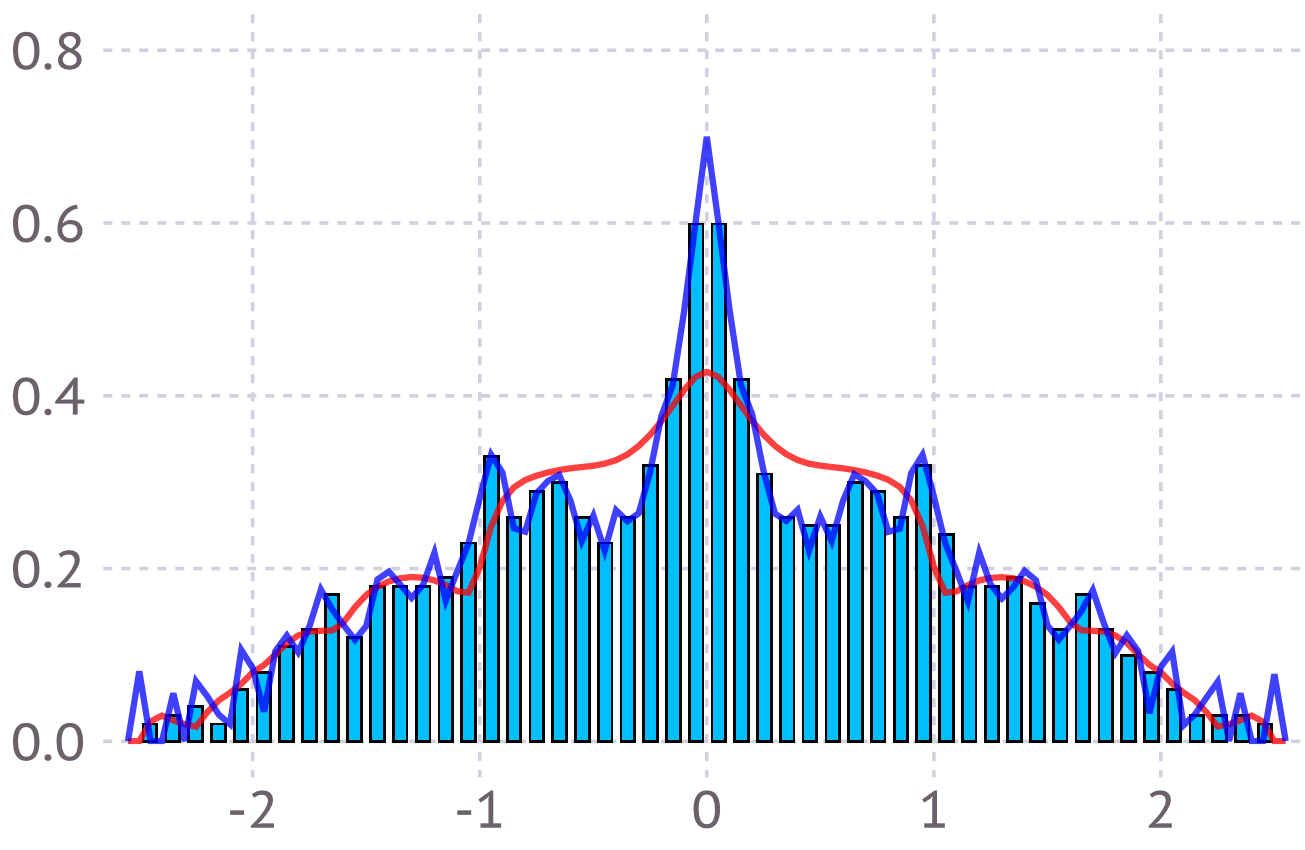}
\caption{FFT${}_{D2}$(0.5) and ER(1)}  
\end{subfigure}\hspace*{\fill}
\begin{subfigure}{0.48\textwidth}
\includegraphics[width=\linewidth]{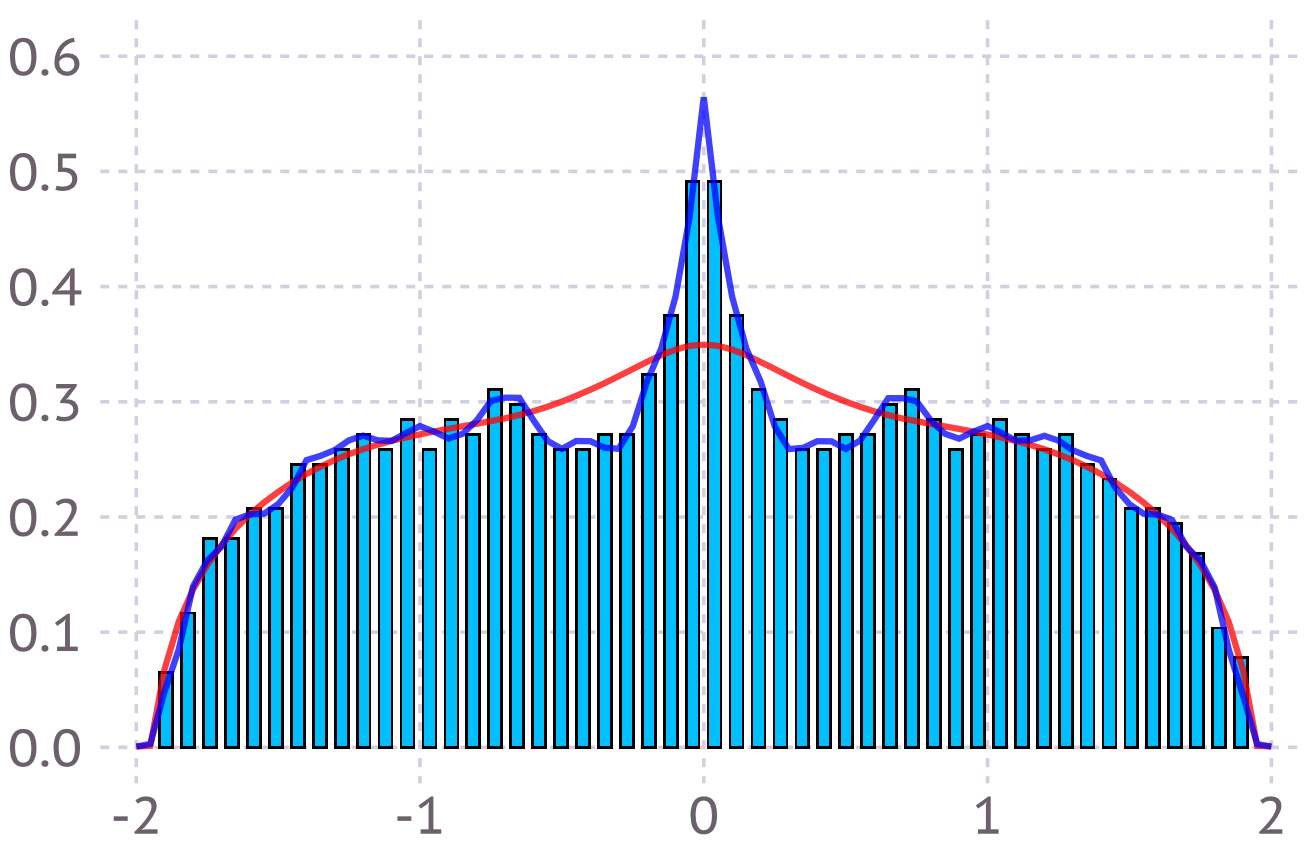}
\caption{FFT${}_{D2}$(0.5) and Perm(0.5)}  
\end{subfigure}

\medskip
\begin{subfigure}{0.48\textwidth}
\includegraphics[width=\linewidth]{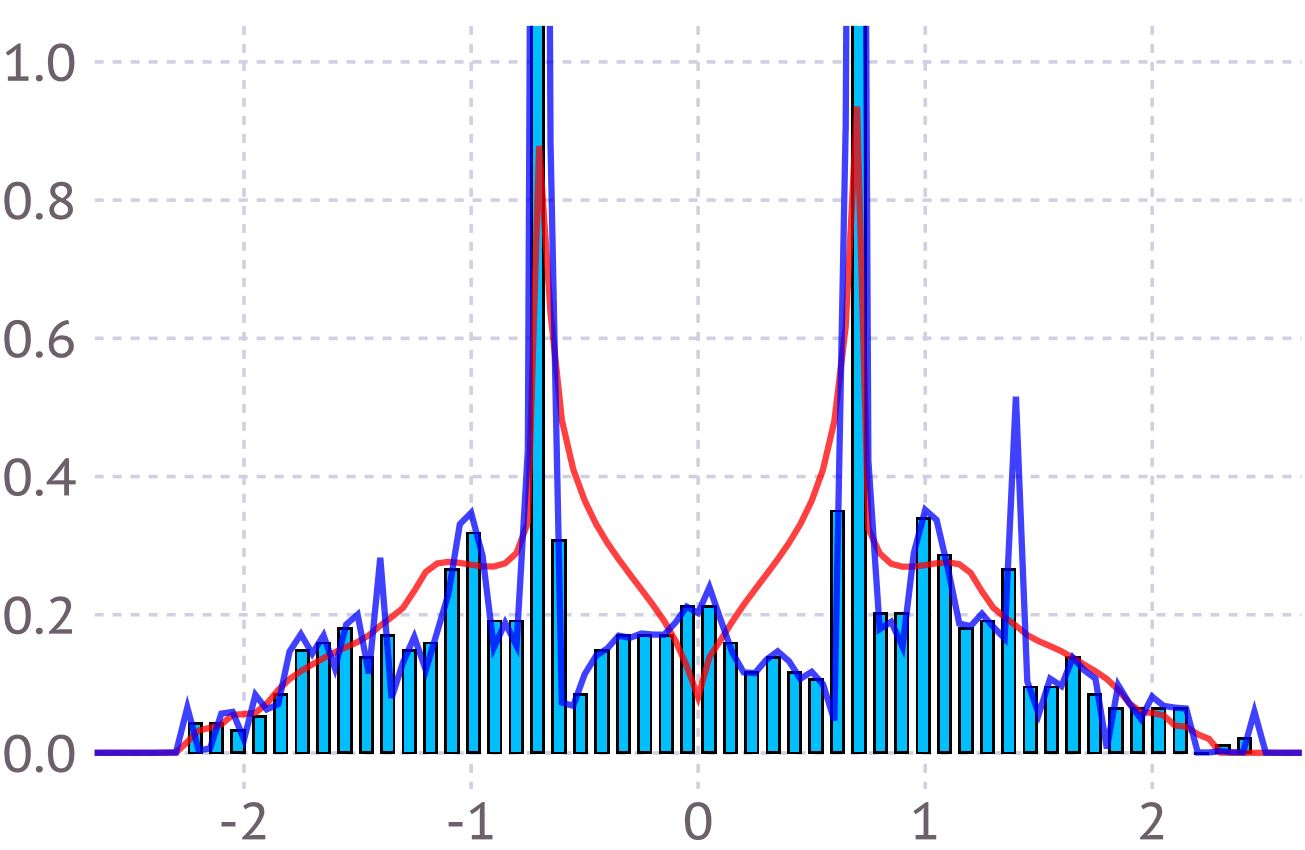}
\caption{UBM${}_{D2}$(0.125) and ER(1)}  
\end{subfigure}\hspace*{\fill}
\begin{subfigure}{0.48\textwidth}
\includegraphics[width=\linewidth]{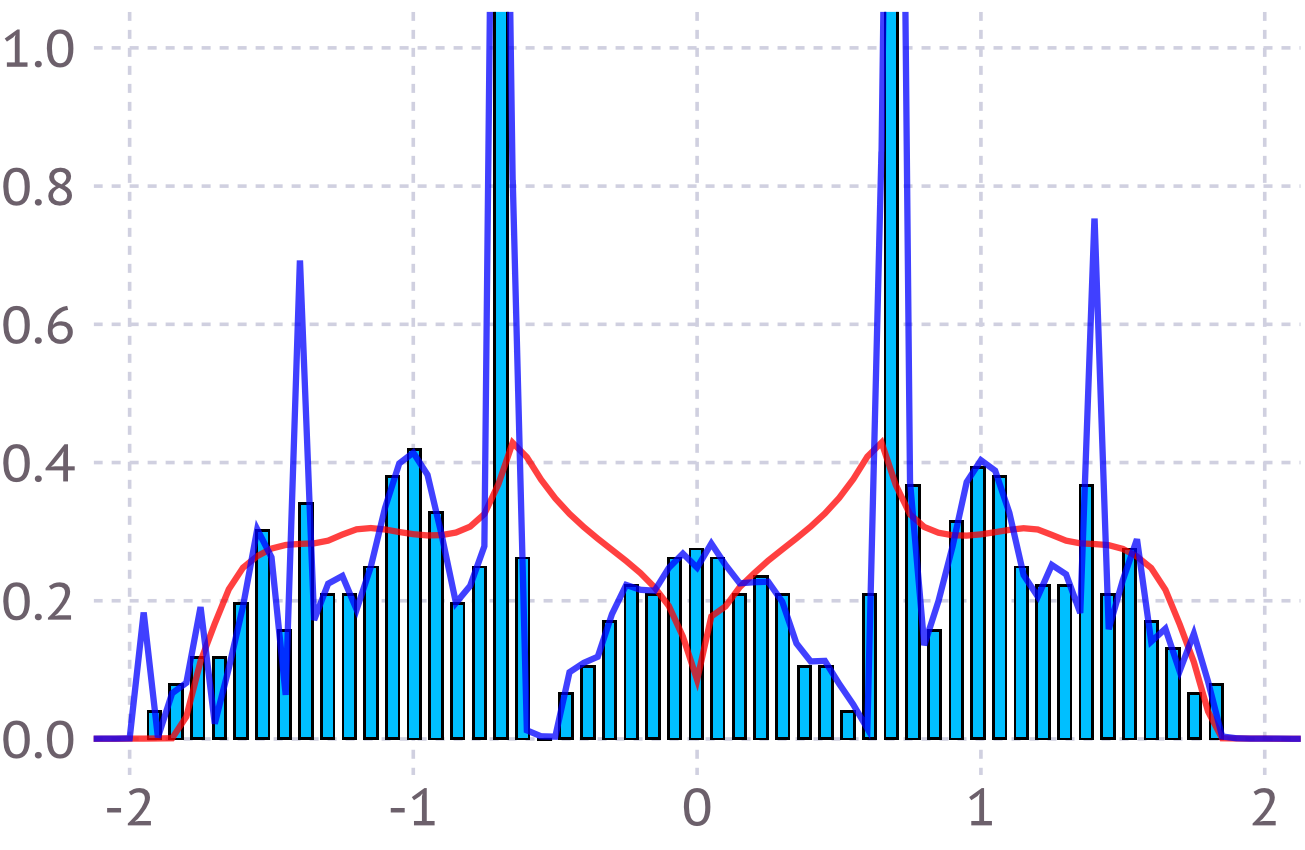}
\caption{UBM${}_{D2}$(0.125)  and Perm(0.5)}  
\end{subfigure}

\medskip
\begin{subfigure}{0.48\textwidth}
\includegraphics[width=\linewidth]{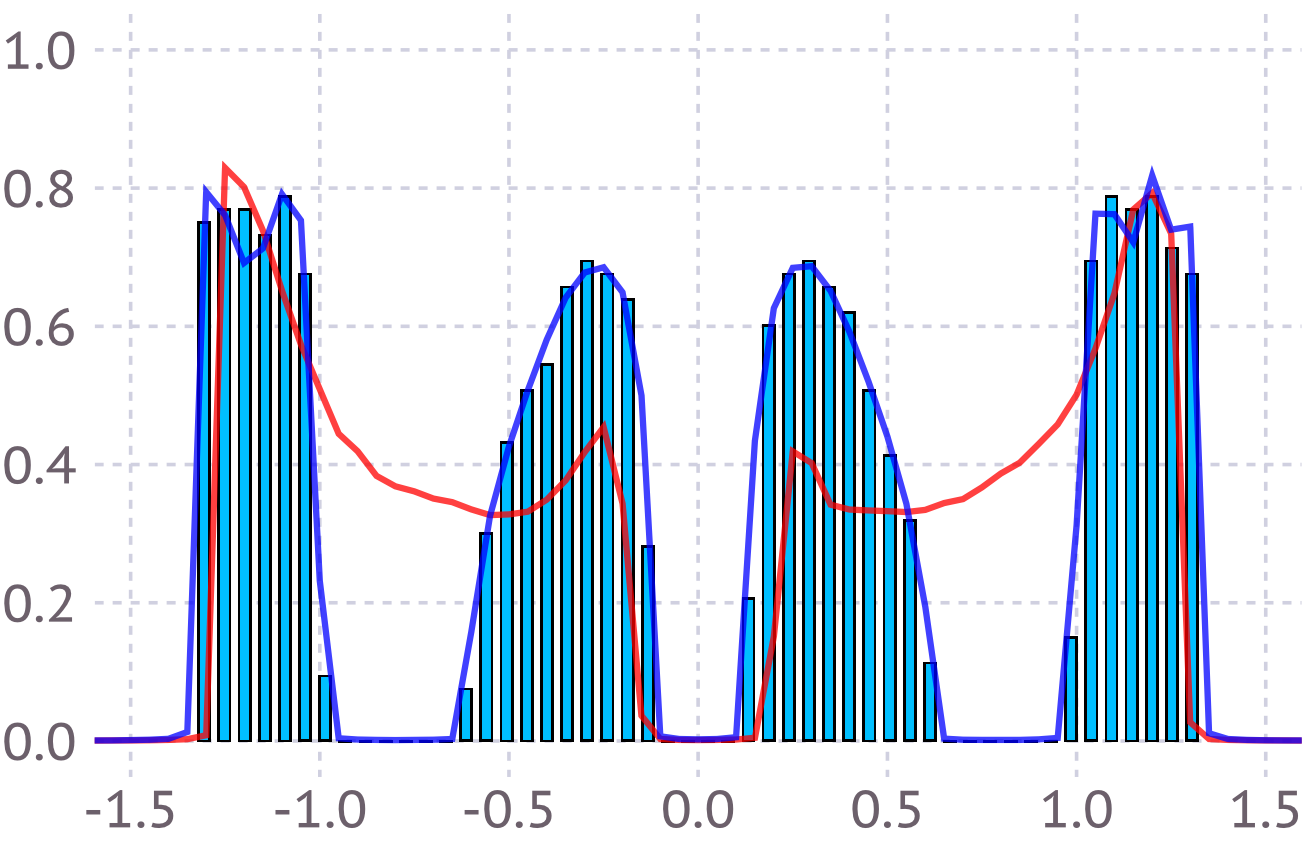}
\caption{UBM${}_{D1}$(0.125) and UBM${}_{D3}$(0.125)} 
\end{subfigure}\hspace*{\fill}
\begin{subfigure}{0.48\textwidth}
\includegraphics[width=\linewidth]{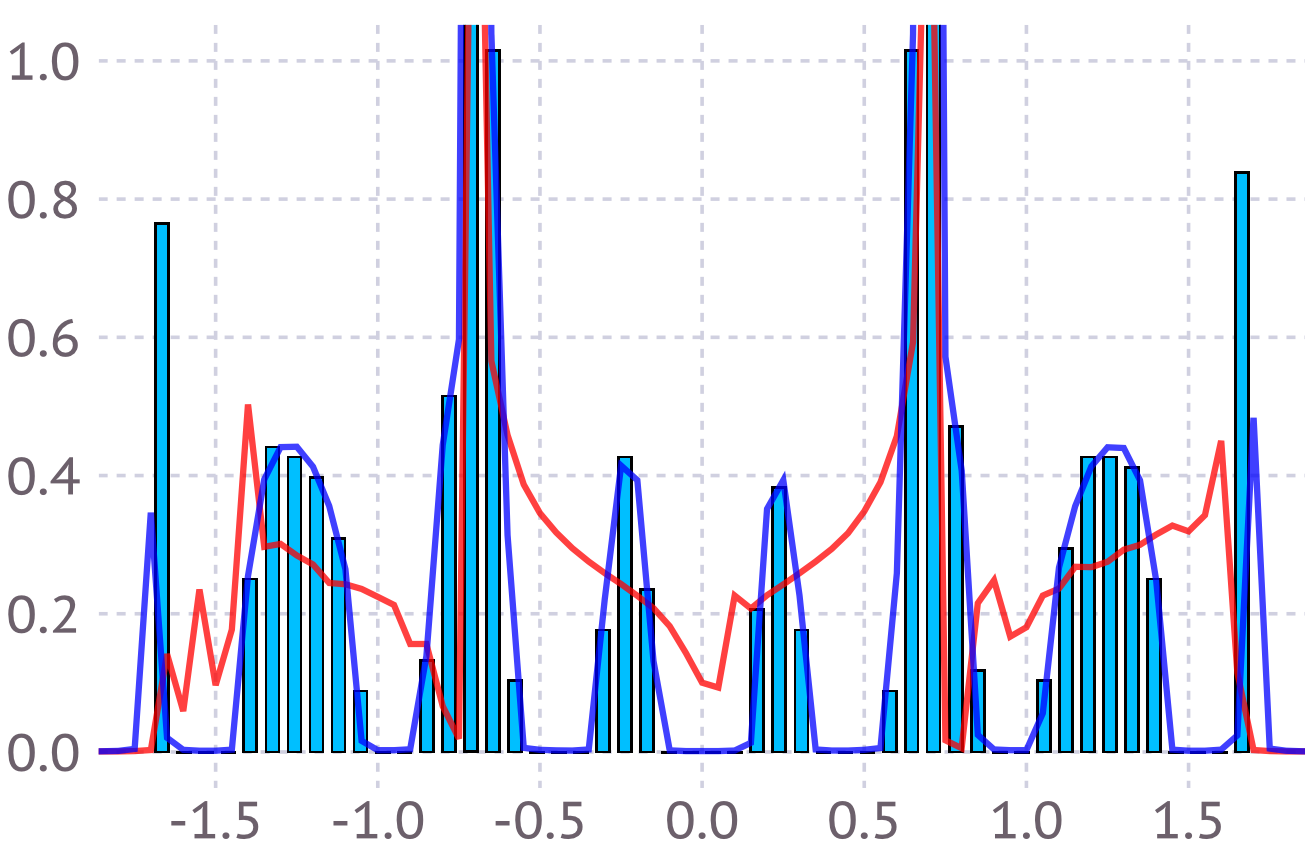}
\caption{UBM${}_{D2}$(0.125) and FFT${}_{D1}$(0.5)}  
\end{subfigure}
\caption{Results of the numerical simulations.} \label{Fig:NumResult}
\end{figure}

In each case, we represent

\begin{itemize}
	\item the histogram of the eigenvalues of one realization of $\frac 1 {\sqrt 2}(X_N+Y_N)$ for matrices of size $N=1000$,
	\item in blue, the density of the spectral distribution of $\frac 1 {\sqrt 2}(X+Y)$, where $X$ and $Y$ are free over the diagonal and such that their $\Delta$-distribution is the one of $X_N$ and $Y_N$ 
	\item in red, the density of the free convolution of the empirical eigenvalues distributions of $X_N$ and $Y_N$. 
\end{itemize}

In the algorithm, the small parameter $y$ equals $0.001$. We stop the fixed point process at the threshold $\| \Omega_n - \Omega_{n+1}\|_\infty \leq 0.001$.

The prediction by asymptotic freeness over the diagonal fits accurately the histogram in all situations we have investigated numerically. In Picture (a) to (d) we see that the deviation between the free case and the free with amalgamation case is mainly at the center of the spectrum. In Picture (e) to (h), the time of the Brownian motion is quite small and so the difference between the red and the blue line is more important.

\section{Proof of the theorems}\label{Sec:proofs}

\subsection{Algebras of graph polynomials}\label{Sec:CactiField}

We first make precise the notion of cacti mentioned previously.

\begin{Def}
A connected graph is a \emph{cactus} if every edge belongs to exactly one simple cycle. An \emph{oriented cactus} is a cactus such that each simple cycle is directed.
\end{Def}

\begin{figure}[h!]
 \centering
  \includegraphics[width=80pt]{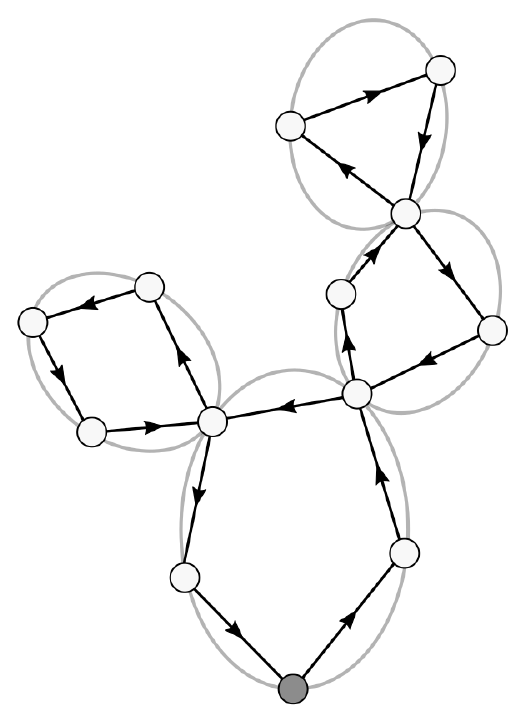}
 \caption{Cactus with four simple cycles}
 \label{ex2}
\end{figure}

We define $\mathcal{C}$ to be the set of \emph{cactus-type monomials}, that is such that the underlying graph is an oriented cactus and the input and output are equal. We denote by $\mcal C_N$ the vector space generated by $\left(g(\mbf A_N)\right)_{g\in\mathcal{C}}$. More generally we define $\mcal F$ to be the set of graph monomials obtained by considering a directed simple path oriented from the right to the left and by attaching oriented cacti on the vertices of the path. The input is the far right vertex of the path, the output is the far left. We then denote by $\mcal F_N$ the vector space generated by $\left(g(\mbf A_N)\right)_{g\in\mathcal{F}}$. It is clear that $\mcal C \subset \mcal F$ and so $\mcal C_N \subset \mcal F_N$. Moreover, note that $\mcal F_N = \mcal C_N\langle \mbf A_N \rangle$, namely it is the vector space generated by elements of the form 
\begin{align}
\label{eq:monomial}
P=g_0(\mbf A_N) A_{N,\ell_{1}}^{(k_{1})} g_1(\mbf A_N) \cdots A_{N,\ell_{d}}^{(k_{d})}g_d(\mbf A_N),
\end{align}
where, for $i\in\left\{ 0,...,d\right\}$, $g_{i}$ is in $\mcal C$.

\begin{Lem}\label{Lem:FNCN} The spaces $\mcal C_N$ and $\mcal F_N$ are unital algebras and $\Delta(\mcal F_N) = \mcal C_N$. 
\end{Lem}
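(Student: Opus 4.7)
The plan is to translate matrix multiplication and the diagonal projection $\Delta$ into simple graph operations on the underlying diagrams, and then to verify that the cactus property is preserved under each of these operations.

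First I would observe that, by the very definition of $g(\mbf A_N)$, the matrix product $g_1(\mbf A_N)\, g_2(\mbf A_N)$ equals $g(\mbf A_N)$ where $g$ is the graph monomial obtained by identifying the input $v_{in}^{g_1}$ with the output $v_{out}^{g_2}$ and declaring $v_{in}^{g_2}$, $v_{out}^{g_1}$ to be the new input and output. The key combinatorial fact is: two cacti glued at a single vertex form a cactus. Indeed, any simple cycle in the glued graph using edges from both factors would have to cross the glueing vertex twice, which is impossible in a simple cycle, so no new simple cycles appear and each edge still belongs to exactly one simple cycle. Using this, if $g_1, g_2 \in \mcal C$ then $v_{in}^{g_i} = v_{out}^{g_i}$ for $i=1,2$, so the product graph still has input equal to output and is a cactus; hence $\mcal C_N$ is closed under multiplication. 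For $\mcal F_N$, the same gluing concatenates the two oriented paths into one and merges the cacti at the glueing vertex into a single larger cactus (again by the same fact). The $N\times N$ identity arises from the trivial one-vertex graph monomial, which lies in both $\mcal C$ and $\mcal F$, so both are unital algebras.

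For the equality $\Delta(\mcal F_N) = \mcal C_N$, I would first note that for any $g \in \mcal C$ the constraint $v_{in} = v_{out}$ forces $\phi(v_{in}) = \phi(v_{out})$ in the evaluation formula, hence $i=j$; so $g(\mbf A_N)$ is already diagonal and $\Delta g(\mbf A_N) = g(\mbf A_N)$. Combined with $\mcal C \subset \mcal F$ this gives $\mcal C_N \subseteq \Delta(\mcal F_N)$. For the reverse inclusion, given $f \in \mcal F$ let $\tilde f$ be the graph monomial obtained from $f$ by identifying $v_{in}$ with $v_{out}$ and taking this common vertex as the new input/output. Under this identification the directed path of $f$ becomes a directed simple cycle, and the cacti originally attached at the two endpoints get glued at the common vertex into a single cactus; no unwanted simple cycle appears (by the same gluing argument), so $\tilde f \in \mcal C$. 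Inspecting the summation formulas shows $\Delta f(\mbf A_N) = \tilde f(\mbf A_N) \in \mcal C_N$, which proves $\Delta(\mcal F_N) \subseteq \mcal C_N$.

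The only genuinely delicate point is the combinatorial lemma that gluing cacti at a single vertex, or identifying the two endpoints of a path decorated with cacti, never creates new simple cycles. Once this is isolated, the algebra structure of $\mcal C_N$ and $\mcal F_N$ and the identity $\Delta(\mcal F_N) = \mcal C_N$ are read off directly from the correspondence between matrix operations and the graph operations of gluing and endpoint identification.
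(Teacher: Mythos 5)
Your proof is correct and follows essentially the same route as the paper: translate matrix multiplication into gluing at a vertex and $\Delta$ into identification of input with output, then observe these operations preserve the cactus and decorated-path structures. You supply the explicit combinatorial verification (gluing cacti at a vertex creates no new simple cycles) that the paper's terse "It follows that..." leaves implicit, but this is the same argument, not a different one.
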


\begin{proof} For any graph monomials $g_{1}$ and $g_{2}$, the product of matrices $g_1(\mbf A_N) g_2(\mbf A_N)$ is equal to $g(\mbf A_N)$, where $g$ is obtained by identifying the input of $g_1$ with the output of $g_{2}$. It follows that $\mcal C_N$ is an algebra, and so is $\mcal F_N$. Both are unital by considering the trivial graph with a single isolated vertex. 

For any graph monomial $g$, the matrix $\Delta\big( g(\mbf A_N) \big)$ is equal to $\Delta(g)(\mbf A_N)$, where $\Delta(g)$ is obtained by identifying the input with the output of $g$. It follows that $\Delta(\mcal F_N) = \mcal C_N$.
\end{proof}

\begin{Lem}\label{Lem:FNAN} One has $\mcal F_N = \mcal A_N$ and consequently $\mcal C_N = \mcal B_N$. 
 \end{Lem}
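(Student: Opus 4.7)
The plan is to prove $\mcal A_N = \mcal F_N$ by two inclusions, and then the equality $\mcal B_N = \mcal C_N$ will follow immediately from $\mcal B_N = \Delta(\mcal A_N) = \Delta(\mcal F_N) = \mcal C_N$ by Lemma~\ref{Lem:FNCN}.

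For the inclusion $\mcal A_N \subseteq \mcal F_N$, I would invoke the minimality of $\mcal A_N$. By Lemma~\ref{Lem:FNCN}, $\mcal F_N$ is already a unital algebra with $\Delta(\mcal F_N) = \mcal C_N \subseteq \mcal F_N$, so $\mcal F_N$ is closed under $\Delta$. It remains to observe that every generator $A_{N,\ell}^{(k)}$ lies in $\mcal F_N$: each $A_{N,\ell}^{(k)}$ is the evaluation of the graph monomial consisting of a single edge $v_{in} \to v_{out}$ labeled by $(\ell,k)$, which is a path of length one with no cacti attached and hence belongs to $\mcal F$. Minimality of $\mcal A_N$ then yields $\mcal A_N \subseteq \mcal F_N$.

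For the reverse inclusion $\mcal F_N \subseteq \mcal A_N$, since $\mcal F_N = \mcal C_N\langle \mbf A_N\rangle$ and $\mcal A_N$ is a unital algebra containing $\mbf A_N$, it suffices to prove $\mcal C_N \subseteq \mcal A_N$, i.e.\ $g(\mbf A_N) \in \mcal A_N$ for every cactus-type monomial $g$. I would do this by induction on the number of simple cycles of the underlying cactus. For the base case of a single cycle through the common root $v_{in}=v_{out}=v_0$ with oriented edges $e_1,\ldots,e_k$, one checks directly from the defining sum that $g(\mbf A_N) = \Delta\big(A_{e_k}A_{e_{k-1}}\cdots A_{e_1}\big)$, which lies in $\mcal A_N$ since $\mcal A_N$ is closed under products and under $\Delta$. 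For the inductive step, any cactus with $\geq 2$ cycles admits a distinguished cycle $C_0 = (u_0=v_0, u_1, \ldots, u_{k-1}, u_0)$ through $v_0$, with sub-cacti $H_i^{(j)}$ rooted at each $u_i$ (these are genuine cacti because removing $C_0$ disconnects them). By the induction hypothesis each $h_i^{(j)}(\mbf A_N) \in \mcal A_N$, and since these are diagonal matrices so is the product $D_i := \prod_j h_i^{(j)}(\mbf A_N) \in \mcal A_N$.

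The key computation then factorizes the sum over $\phi$: conditioning on the values of $\phi$ on the cycle $C_0$, the sub-cactus contributions factor as diagonal entries of the $D_i$, while the cycle edges contribute a product of matrix entries. A careful bookkeeping of indices yields the identity
\[
g(\mbf A_N) \;=\; D_0 \cdot \Delta\!\left(A_{e_k}\,D_{k-1}\,A_{e_{k-1}}\,D_{k-2}\cdots A_{e_2}\,D_1\,A_{e_1}\right),
\]
and since the matrix inside $\Delta$ is a product of elements of $\mcal A_N$, its diagonal projection lies in $\mcal A_N$, and so does the product with $D_0$. This places $g(\mbf A_N)$ in $\mcal A_N$, completing the induction.

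The main obstacle is the bookkeeping in the inductive step: one must verify that the decomposition of a cactus into its root cycle and hanging sub-cacti is well-defined (i.e.\ the sub-cacti intersect $C_0$ only at the attachment vertex, which follows from the cactus property that each edge lies in exactly one simple cycle), and then check that the factorization of the combinatorial sum indeed matches an iterated product of diagonal matrices and matrix products under $\Delta$, with the right orientations. Once this displayed identity is established, both inclusions are immediate and the lemma follows.
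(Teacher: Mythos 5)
Your proof is correct, but takes a genuinely different route from the paper's. You first use $\mcal F_N = \mcal C_N\langle \mbf A_N\rangle$ to reduce the inclusion $\mcal F_N \subseteq \mcal A_N$ to showing $\mcal C_N\subseteq\mcal A_N$, and you prove the latter by induction on the \emph{number of simple cycles} of the cactus, exhibiting an explicit decomposition $g(\mbf A_N)=D_0\cdot\Delta\!\left(A_{e_k}D_{k-1}\cdots D_1 A_{e_1}\right)$ around a root cycle. The paper instead runs a single induction on the \emph{number of edges} directly over all of $\mcal F$ (path-plus-cacti monomials), with a case analysis: if the input and output are distinct, or equal but lying on more than one cycle, one factorizes $g(\mbf A_N)=g_1(\mbf A_N)g_2(\mbf A_N)$ into pieces with fewer edges, using the fact (already isolated in Lemma~\ref{Lem:FNCN}) that concatenation of graph monomials realizes matrix product; if the common root lies on exactly one cycle, one splits that vertex into $v_{in}$ and $v_{out}$ to obtain a path-type monomial $P$ with $\Delta(P)=g(\mbf A_N)$, then factorizes $P$ and applies the induction hypothesis. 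So the paper never writes an explicit formula, peeling off one factor (or one vertex-split) at a time, whereas your argument removes a whole root cycle at once via a concrete identity. Your version is more transparent as to \emph{why} a cactus evaluation lies in $\mcal A_N$, at the cost of the bookkeeping you flag; the paper's is more terse and uniform since it treats $\mcal C_N$ and $\mcal F_N$ together. Two small points to tidy up in your write-up: the cycle induction should also cover the trivial cactus (one isolated vertex, zero cycles), whose evaluation is $I_N\in\mcal A_N$ since $\mcal A_N$ is unital; and the displayed identity implicitly uses that the root cycle is consistently oriented, which is ensured by the requirement that cactus-type monomials have \emph{oriented} cacti as underlying graphs.
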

 
\begin{proof} Recall that $\mcal A_N$ is the smallest unital algebra containing $\mbf A_N$  stable under $\Delta$. It is clear that $\mcal A_N \subset \mcal F_N$ since $\mbf A_N \subset \mcal F_N$ and $\Delta(\mcal F_N) \subset \mcal F_N$ by Lemma \ref{Lem:FNCN}.

We now prove the reverse inclusion by induction on the number of edges. If $g$ has a single edge then $g(\mbf A_N) \in{\mbf A_N} \cup \Delta(\mbf A_N) \subset\mcal A_N$.

 Assume that for some $n\geq 2$, every $g(\mbf A_N) \in \mcal F_N$ whose graph has fewer than $n$ edges belongs to $\mcal A_N$. Let $g(\mbf A_N) \in \mcal F_N$ for some graph with $n$ edges. If the input and the output of $g$ are not equal, or if they are equal but belong to more than one cycle, then we can write $g(\mbf A_N)= g_1(\mbf A_N)g_2(\mbf A_N)$, where $g_1$ and $g_2$ have fewer than $n$ edges. Since $\mcal A_N$ is an algebra, then $g(\mbf A_N)\in \mcal A_N$.

Assume that the input and the output are equal to a vertex $v$ and belong to exactly one cycle. Then one can split the vertex $v$ into two vertices $v_{in}$ and $v_{out}$, allowing one to exhibit a product $P$ of the form (\ref{eq:monomial}) such that $\Delta\left(P\right)=g(\mbf A_N)$. Now we can factorize $P = g_1(\mbf A_N) g_2(\mbf A_N)$ as in the previous case, where $g_1(\mbf A_N)$ and $g_2(\mbf A_N)$ belong to $\mcal A_N$ by the induction hypothesis. Hence $g(\mbf A_N) = \Delta\big( g_1(\mbf A_N) g_2(\mbf A_N) \big)$ belongs to $\mcal A_N$.
\end{proof}

\subsection{Preliminary lemmas}\label{Sec:PrelLem}

We start by considering arbitrary families $\mbf A_{N,1} \etc \mbf A_{N,L}$ of random matrices. By enlarging them if necessary, we can assume that the families are closed under $*$. 

\begin{Lem}\label{MainLem0}If
$$\esp\left[ \frac 1 N \Tr\,  \eps_N\right]\limN 0$$
for every $\eps_N$ as in Theorem~\ref{MainTh}, i.e. $$\eps_N:=\big( P_{N,1}(\mbf A_{N,\ell_1}) - \Delta P_{N,1}(\mbf A_{N,\ell_1}) \big) \cdots \big( P_{N,n}(\mbf A_{N,\ell_{n}}) - \Delta P_{N,{n}}(\mbf A_{N,\ell_{n}}) \big)$$ where
 $n\geq 2$, $\ell_1\neq \ell_2\neq \dots \neq \ell_n$ and $P_{N,1},\ldots,P_{N,n} \in \mathcal{B}_N\langle X_k:k\in K\rangle$ are explicitly given by
$P_{N,i}=g_{0,i}(\mbf A_N)X_{k_i(1)}g_{1,i}(\mbf A_N)\cdots X_{k_i(d_i)} g_{d_i,i}(\mbf A_N),$
with $g_{j,i}$ cactus-type monomials (which does not depend on $N$), then,
we also have
$$\esp\left[ \frac 1 N \Tr (\eps_N\eps_N^*)^p \right]\limN 0$$
for any such $\eps_N$.

\end{Lem}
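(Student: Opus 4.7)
The plan is to expand $\Tr\bigl((\eps_N\eps_N^*)^p\bigr)$ into a finite linear combination of traces of alternating centered products and then invoke the hypothesis term by term. Throughout I treat $p$ as a positive integer; the extension to arbitrary real $p\geq 1$ follows from standard interpolation (Jensen's inequality) since $q \mapsto \bigl(\tfrac1N\Tr|M|^q\bigr)^{1/q}$ is non-decreasing in $q$.

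First I would write $(\eps_N\eps_N^*)^p$ as the product of $2pn$ centered factors $A_j = Q_j - \Delta Q_j$ obtained by concatenating $p$ copies of the defining product for $\eps_N$ and $p$ copies of that for $\eps_N^*$. Each $Q_j$ is of the form $P_{N,i}(\mbf A_{N,\ell_i})$ or its adjoint, and is in particular a polynomial in a single family $\mbf A_{N,\ell_i}$ with coefficients in $\mcal B_N$. The associated index sequence is the periodic palindrome $\rho^p$ with $\rho = (\ell_1,\ldots,\ell_n,\ell_n,\ldots,\ell_1)$. Consecutive indices coincide only at a fixed set of $2p$ cyclic \emph{junctions}: the middle of each copy of $\rho$ (two $\ell_n$'s in a row) and the transitions between consecutive copies (two $\ell_1$'s in a row). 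All other $2p(n-1)$ consecutive pairs already carry distinct indices.

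Next, at each junction $A_j A_{j+1}$ with common index $\ell$, I would use the decomposition
\[A_j A_{j+1} = \bigl(A_j A_{j+1} - \Delta(A_j A_{j+1})\bigr) + \Delta(A_j A_{j+1}) =: B + D,\]
where $B$ is centered and $D \in \mcal B_N$ is diagonal. Since $A_j A_{j+1}$ is a polynomial in $\mbf A_{N,\ell}$ with coefficients in $\mcal B_N$, so is $B$. Substituting splits the product into two summands. In the $B$-branch, the junction is replaced by a single centered factor still indexed by $\ell$, and no new junction is created because the former neighbors $A_{j-1}$, $A_{j+2}$ carry indices different from $\ell$. In the $D$-branch I would absorb $D$ into the next factor via
\[D(Q - \Delta Q) = DQ - \Delta(DQ) = \tilde Q - \Delta \tilde Q,\]
valid since $D$ is diagonal; here $\tilde Q = DQ$ is still a polynomial in $\mbf A_{N,m}$ with coefficients in $\mcal B_N$, where $m$ is the neighbor's index. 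In both branches the junction is eliminated and the number of centered factors drops by one. A cyclic junction is handled identically after a trace-cyclic rotation of the product.

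Iterating this decomposition at most $2p$ times writes $\Tr\bigl((\eps_N\eps_N^*)^p\bigr)$ as a sum of at most $4^p$ traces of linearly alternating centered products of at least $2p(n-1) \geq 2$ factors—exactly the form appearing in the hypothesis. Applying the hypothesis term by term and summing then yields $\esp\bigl[\tfrac1N\Tr(\eps_N\eps_N^*)^p\bigr] \to 0$.

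The main obstacle is keeping the reduction inside the precise polynomial class required by the hypothesis of Theorem~\ref{MainTh}: the new factor coefficients must remain $N$-independent linear combinations of cactus-type monomial evaluations in $\mbf A_N$. This is guaranteed by Lemmas~\ref{Lem:FNCN} and \ref{Lem:FNAN}, which identify $\mcal B_N$ with $\mcal C_N$ and show it is a unital algebra closed under $\Delta$; the product and $\Delta$ operations on cactus-type graph monomials merely glue cacti into new cacti whose combinatorial structure does not depend on $N$.
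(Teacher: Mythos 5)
There is a genuine gap, and it sits exactly in your $D$-branch. The index word of $(\eps_N\eps_N^*)^p$, namely $(\ell_1,\dots,\ell_n,\ell_n,\dots,\ell_1)$ repeated $p$ times, is a palindrome around each junction: the neighbours of a junction pair carry the \emph{same} index ($\dots,\ell_{n-1},\ell_n,\ell_n,\ell_{n-1},\dots$). So when you delete the pair $A_jA_{j+1}$ and absorb $D=\Delta(A_jA_{j+1})$ into $A_{j+2}$, the factors $A_{j-1}$ and $\tilde A_{j+2}$ that become adjacent have equal indices, and a new junction appears; your claim that the junction is eliminated with none created is false, and the cascade propagates outward. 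Running it to the end (already for $n=2$, $p=1$) you are left with terms such as $\frac1N\Tr\big[C_1\,\Delta(C_2C_2^*)\,C_1^*\big]$ with $C_i=P_{N,i}(\mbf A_{N,\ell_i})-\Delta P_{N,i}(\mbf A_{N,\ell_i})$: a trace of a product whose consecutive factors come from the same family, which the hypothesis does not cover and which is generically of order $1$ (it is essentially a positive quantity), not $o(1)$. The further split $C_1\tilde C=B+D'$ is of no help there, since $\frac1N\Tr B=0$ while $\frac1N\Tr D'$ is the same quantity back again. So the reduction cannot be closed by your bookkeeping.

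In fact the statement you set out to prove, with $\eps_N$ taken literally as the product $C_1\cdots C_n$ without an outer $\Delta$, is false: already for exactly free centred elements one has $\tau(c_1c_2c_2^*c_1^*)=\tau(c_1c_1^*)\tau(c_2c_2^*)\neq 0$, and the matrix analogue shows $\esp\big[\frac1N\Tr(\eps_N\eps_N^*)\big]$ does not vanish. The $\eps_N$ intended (and needed for Theorem~\ref{MainTh}) is the diagonal matrix $\eps_N=\Delta\big[(P_{N,1}-\Delta P_{N,1})\cdots(P_{N,n}-\Delta P_{N,n})\big]$; the displayed formula in the lemma omits the $\Delta$, which is immaterial for the hypothesis (traces agree) but essential for the conclusion. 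With this reading the paper's proof is a one-step version of your absorption trick with no cascade: since $\eps_N$ is diagonal, the whole tail $g(\mbf A_N):=\eps_N^*(\eps_N\eps_N^*)^{p-1}$ lies in $\mcal B_N$, and because $\Delta$ is a $\mcal B_N$-bimodule map one may replace the last coefficient $g_{d_n,n}(\mbf A_N)$ of $P_{N,n}$ by $g_{d_n,n}(\mbf A_N)g(\mbf A_N)$, leaving a single alternating product of length $n$ to which the hypothesis applies directly. Your closing remarks about staying in the cactus class (via Lemmas~\ref{Lem:FNCN} and~\ref{Lem:FNAN}) are fine and are also what that argument needs, but your expansion into $2pn$ centred factors and the claimed termination after at most $2p$ junction eliminations do not work.
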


\begin{proof} Let us remark that $(\eps_N\eps_N^*)^p = \eps_N\eps_N^*(\eps_N\eps_N^*)^{p-1}$ can be written as: 
\begin{align*}
\big( P_{N,1}(\mbf A_{N,\ell_1}) - \Delta P_{N,1}(\mbf A_{N,\ell_1}) \big) \cdots \big( P_{N,n}(\mbf A_{N,\ell_{n}}) - \Delta P_{N,{n}}(\mbf A_{N,\ell_{n}}) \big)\eps_N^*(\eps_N\eps_N^*)^{p-1}.
\end{align*}
Moreover, $\eps_N^*(\eps_N\eps_N^*)^{p-1}$ is an element of $\mcal B_N$, that we denote $g(\mbf A_N)$. Since $\Delta$ is a conditional expectation, one has $\Delta\big(  P_{N,{n}}(\mbf A_{N,\ell_{n}})\big) g(\mbf A_N) = \Delta\big(  P_{N,{n}}(\mbf A_{N,\ell_{n}}) g(\mbf A_N)\big)$. Hence one can update the last coefficient of the polynomial $P_{N,n}$ in order to include the factor $\eps_N^*(\eps_N\eps_N^*)^{p-1}$. More precisely, writing 
	$$P_{N,n}=g_{0,n}(\mbf A_N)X_{k_n(1)}g_{1,n}(\mbf A_N)\cdots X_{k_n(d_n)} g_{d_n,n}(\mbf A_N),$$
	 we can replace this polynomial by
	\eq
	\tilde P_{N,n}   =  g_{0,n}(\mbf A_N)X_{k_n(1)}g_{1,n}(\mbf A_N)\cdots X_{k_n(d_n)} \tilde g_{d_n,n}(\mbf A_N),
	\qe
where $ \tilde g_{d_n,n}(\mbf A_N) =   g_{d_n,n}(\mbf A_N)g(\mbf A_N)$. Hence we are left to prove that $ \esp\left[\frac 1 N \Tr \, \eps_N \right]$ converges to zero.
\end{proof}
	
	For any $i\in \{1,...,n\}$, let $ P_{N,i} \in \mcal B_N\langle X_k:k\in K\rangle$. We define $M_i := P_{N,i}(\mbf A_{N,\ell_i})$ and $\overset \circ{M_i} :=  M_i- \Delta(M_i)$. In order to compute the quantity $\esp\big[\frac 1 N \Tr (\overset \circ{M_1}\dots \overset \circ{M_n})   \big],$
	we use the moment method, involving sums over unrooted labeled graphs that we introduce now.
	
In this article, a {\em test graph} $T$  is the data of a finite, directed and labeled graph $T=(V,E,K)$. Note that the graphs are possibly disconnected. The labelling map $K:E \to \mbf A_N$ corresponds to an assignment of matrices $e \mapsto K_e$. We define the quantities
	 \eqa
	 	\tau_N\big[ T \big] & = & \esp\bigg[\frac 1 {N^{c(T)}} \sum_{\substack{ \phi : V\to [N]}} \prod_{ e= (v,w) \in E } K _e\big( \phi(w), \phi(v) \big)\bigg]\label{Trace}\\
		\tau^0_N\big[ T \big] & = &  \esp\bigg[\frac 1 {N^{c(T)}} \sum_{\substack{ \phi : V\to [N] \\ \mrm{injective}}}  \prod_{ e= (v,w) \in E } K _e\big( \phi(w), \phi(v) \big)\bigg],
	\qea
where $c(T)$ is the number of connected components of $T$.

For any partition $\pi$ of $V$, we define $T^\pi=(V^\pi, E^\pi, K^\pi)$ as the graph obtained from $T$ by identifying vertices in the same block of $\pi$. More precisely, the vertices of $T^\pi$ are the blocks of $\pi$, each edge $e=(v,w)$ induces an edge $e^\pi=(B_v,B_w)$ where $B_v$ and $B_w$ are the blocks of $\pi$ containing $v$ and $w$ respectively. The label $K^\pi_{e_\pi}$ is the same as the label $K_e$. We say that $T^\pi$ is a quotient of $T$.  We then have the two relations \cite[Lemma 2.6]{Male2011}
	\eqa
		\tau_N\big[ T \big] & = & \sum_{\pi \in \mcal P(V)} N^{c(T^\pi) -c(T)} \tau^0_N\big[ T^\pi \big]\label{RelTrace}\label{InjTrace}\\
		\tau^0_N\big[ T \big] & = & \sum_{\pi \in \mcal P(V)} N^{c(T^\pi) -c(T)}\mrm{Mob}(0,\pi)\, \tau_N\big[ T^\pi \big],\label{RelInjTrace}
	\qea
where $\mrm{Mob}(0,\pi)=\Big(\prod_{B \in \pi}(-1)^{|B|}(|B|-1)!\Big)$ is the M\"obius function on the poset of partitions \cite[Example 3.10.4]{Sta}.
	
	We write $M_i =g_{0,i}(\mbf A_N)A_{k_i(1)}g_{1,i}(\mbf A_N)\cdots A_{k_i(d_i)} g_{d_i,i}(\mbf A_N) $, and consider the graph monomial $t_i \in \mcal F$ such that $t_i(\mbf A_N) = M_i$. It is explicitly given by
$$t_i:= \Bigg( \ \overset{\overset{g_{0,i}}\vee }\cdot \overset{( \ell_i,k_i(1))} \longleftarrow  \overset{\overset{g_{1,i}}\vee }\cdot  \overset{( \ell_i,k_i(2))} \longleftarrow   \overset{\overset{g_{2,i}}\vee }\cdot \dots \overset{\overset{g_{d_i-1,i}}\vee } \cdot   \overset{( \ell_i,k_i(d_i))} \longleftarrow \overset{\overset{g_{d_i,i}}\vee }\cdot \ \Bigg),$$
obtained by considering a directed simple path oriented from the right to the left whose edges are labelled by $( \ell_i,k_i(d_i)),\ldots, ( \ell_i,k_i(1))$ and attaching, following the decreasing order on $j=d_i, ..., 0$, the root of the graph monomial $g_{j,i}$ on the vertices of the path.

Let $T=(V,E,K)$ be the test graph obtained by identifying the output vertex of $t_i$ with the input vertex of $t_{i-1}$ for $i\in \{1\etc n\}$ (with notation modulo $n$ for the index $i$). It inherits the edge labels from the $t_i$. Note that as an unrooted graph, $T$ equals $\Delta( t_1 \cdots t_n)$. One can easily verify that $ \esp\big[\frac 1 N \Tr  [ M_1\dots M_n   ] \big] = \tau_N\big[ T\big].$

For each $i=1\etc n$, the graph $t_i$ can be seen as a subgraph of $T$. We denote by $v_i$ the vertex of $T$ corresponding to the input of $t_i$ with the convention $v_0=v_n$. The following lemma describes the cancellations in Formula (\ref{InjTrace}) when one replaces $ M_i$ by $\overset \circ{M_i}= M_i - \Delta( M_i)$. 

\begin{Lem}\label{MainLem1} With above notations for the test graph $T$ we have
	$$  \esp\left[\frac 1 N \Tr \big[ \ \!\!\overset \circ{M_1} \dots \overset \circ{M_n}\ \!\!   \big] \right] = \sum_{ \substack{ \pi \in \mcal P(V) \mrm{ \ s.t.}\\ v_i \not \sim_\pi v_{i-1}, \,  \forall i}} \tau_N^0\big[ T^\pi\big].$$
\end{Lem}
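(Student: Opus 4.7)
The plan is to compute $\esp\bigl[\tfrac{1}{N}\Tr(\overset{\circ}{M_1}\cdots\overset{\circ}{M_n})\bigr]$ by expanding each centred factor as $\overset{\circ}{M_i} = M_i - \Delta(M_i)$ and distributing. Multilinearity of the trace produces an alternating sum indexed by subsets $S \subseteq \{1,\ldots,n\}$, with sign $(-1)^{|S|}$, in which $M_i$ is replaced by $\Delta(M_i)$ precisely when $i \in S$ and left alone otherwise.

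The key reinterpretation step is to recognise each such term as a $\tau_N$-functional of a quotient of $T$. Since $\Delta(M_i)(a,b) = \delta_{a,b}M_i(a,a)$, inserting $\Delta(M_i)$ inside the trace forces the labels assigned to the input $v_i$ and output $v_{i-1}$ of the segment $t_i$ to coincide. Let $\pi_S \in \mcal P(V)$ denote the partition of $V$ whose only non-singleton blocks arise from the identifications $v_{i-1} \sim v_i$ for $i \in S$ (fused together when several of these pairs overlap). The $S$-th term of the expansion then equals $(-1)^{|S|}\tau_N[T^{\pi_S}]$, where the $\frac{1}{N}$ in front of $\Tr$ matches the normalisation $N^{-c(T^{\pi_S})}$ because $T$ is connected and connectedness is preserved under vertex identification.

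The final step is an inclusion-exclusion. Using \eqref{InjTrace}, write $\tau_N[T^{\pi_S}] = \sum_{\pi \geq \pi_S} \tau_N^0[T^\pi]$ (the factors $N^{c(T^\pi)-c(T^{\pi_S})}$ trivialise since $c(T^\pi) = 1$ for all $\pi$). Swapping the order of summation yields $\sum_{\pi \in \mcal P(V)} \tau_N^0[T^\pi] \sum_{S \subseteq S(\pi)} (-1)^{|S|}$, where $S(\pi) := \{i : v_{i-1} \sim_\pi v_i\}$ records which of the defining constraints of a $\pi_S$ are already enforced by $\pi$. The inner sum equals $(1-1)^{|S(\pi)|}$, which vanishes unless $S(\pi) = \emptyset$, leaving exactly the partitions $\pi$ for which $v_i \not\sim_\pi v_{i-1}$ holds for every $i$, as claimed.

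I expect the main obstacle to be purely combinatorial bookkeeping: tracking the cyclic convention $v_0 = v_n$, verifying that the quotient partition $\pi_S$ is well defined when several of the pairs $\{v_{i-1}, v_i\}$ overlap and merge into a single block, and confirming that $T$ remains connected under every quotient so that no stray powers of $N$ survive from the normalisations in \eqref{InjTrace}. Once these points are settled, the argument reduces to a routine binomial-sum cancellation.
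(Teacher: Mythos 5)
Your proof is correct and follows essentially the same approach as the paper's: expand the centred product by multilinearity into an alternating sum over subsets $I\subset\{1,\ldots,n\}$, identify each term as $\tau_N[T_I]$ (your $\tau_N[T^{\pi_S}]$), decompose via \eqref{InjTrace} into $\tau_N^0$-terms over refining partitions, and finish with the binomial cancellation $\sum_{I\subset J_\pi}(-1)^{|I|}=0^{|J_\pi|}$. The notational differences ($\pi_S$ vs.\ $T_I$, $S(\pi)$ vs.\ $J_\pi$) are immaterial, and your observations about connectedness killing the $N$-powers are accurate.
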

\begin{proof}

Let  us consider $I\subset\{1\etc n\}$. Denote by $T_I$ the test graph obtained form $T$ by identifying, for each $i\in I$, the input and output of $t_i$. Then we have 
	$$\esp\left[\frac 1 N \Tr \big[ \ \!\!\overset \circ{M_1} \dots \overset \circ{M_n}\ \!\!   \big]  \right]  = \sum_{ I \subset \{1\etc n\}} (-1)^{|I| } \tau_N\big[ T_I  \big].$$
For each $i=1\etc n$, we denote by $V_i$ the vertex set of $t_i$, seen in the graph $T$.
	\eq
		\esp\left[\frac 1 N \Tr \big[ \ \!\!\overset \circ{M_1} \dots \overset \circ{M_n}\ \!\!   \big]  \right]  =   \sum_{ I \subset \{1\etc n\}} (-1)^{|I|} \ \sum_{ \substack{ \pi \in \mcal P(V)\\ \mrm{s.t. \ } v_{i-1} \sim_{\pi} v_i, \, \forall i\in I} }\tau_N^0\big[ T^\pi \big]
	\qe
For any $\pi \in \mcal P(V)$, denote by $J_\pi$ the set of indices $i\in \{1\etc n\}$ such that $v_{i-1} \sim_{\pi} v_i$. Then, exchanging the order of the sums, we get
	\eq	\esp\left[\frac 1 N \Tr \big[ \ \!\!\overset \circ{M_1} \dots \overset \circ{M_n}\ \!\!   \big]  \right]   =    \sum_{ \substack{\pi  \in \mcal P(V)} }  \Big( \sum_{ \substack{I \subset J_\pi }} (-1)^{|I|}\Big)  \tau_N^0\big[ T^\pi \big].
	\qe
Since the sum in the parentheses vanishes as soon as the index  set $J_\pi$ is nonempty, we get the expected result.
\end{proof}

Our analysis of each term $\tau_N^0\big[ T^\pi\big]$ relies on the geometry of a graph we introduce now. In this article, we call colored component of $T^\pi$ a connected maximal subgraph of $T^\pi$ whose edges are labelled in one of the sets $\{1\}\times K \etc  \{L\}\times K$. We denote by $\mcal {C  C}(T^\pi)$ the set of colored components. Hence a colored component has labels corresponding to only one of the families $\mbf A_{N,1} \etc \mbf A_{N,L}$.  We call graph of colored components of $T^\pi$ the undirected bipartite graph $ \mcal {GCC}( T^\pi) = (\mcal V_\pi, \mcal E_\pi)$, where $\mcal V_\pi =\mcal {CC}(T^\pi) \sqcup V_\pi$ and there is an edge between each vertex and the colored components it belongs to. The definition is slightly different from the one in \cite{Male2011} where $\mcal V_\pi$ is the union of the colored components and the vertices of $T^\pi$ that belong to more than one colored component.

\begin{Lem}\label{MainLem21}
There is no partition $ \pi$ in the sum of Lemma \ref{MainLem1} such that $\mcal G \mcal C \mcal C\big( T^\pi\big)$ is a tree.
\end{Lem}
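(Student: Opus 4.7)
The strategy is to exhibit a non-null-homotopic closed walk in $\mcal{GCC}(T^\pi)$, which will show that this (connected, bipartite) graph has non-trivial $\pi_1$ and hence is not a tree. The natural candidate is the walk that mirrors the cyclic ``necklace'' of test pieces $t_1,\dots,t_n$ comprising $T$: concretely, I would set
$$W \;=\; [v_0] \to [C_1] \to [v_1] \to [C_2] \to \cdots \to [v_{n-1}] \to [C_n] \to [v_0],$$
of length $2n$, where $[C_i]\in\mcal{CC}(T^\pi)$ denotes the colored component of $T^\pi$ containing (the image of) the main path of $t_i$.

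I would then scan for backtracks (positions where $w_{j-1}=w_{j+1}$). Two observations handle the interior of the walk: a backtrack at $[v_i]$ for $1\le i\le n-1$ would force $[C_i]=[C_{i+1}]$, impossible because colored components are determined by the $\ell$-label and $\ell_i\ne\ell_{i+1}$; and a backtrack at $[C_i]$ would force $[v_{i-1}]=[v_i]$, forbidden by the $\pi$-constraint $v_{i-1}\not\sim_\pi v_i$. So the only possible backtrack is at the cyclic seam $[v_0]$, and it occurs precisely when $\ell_1=\ell_n$ (in which case $C_1$ and $C_n$ share both the label $\ell_1=\ell_n$ and the vertex $v_0$, hence coincide as colored components of $T$, and so of $T^\pi$).

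If $\ell_1\ne\ell_n$, then $W$ is non-backtracking of positive even length, so it is non-trivial in $\pi_1$ and we are done. Otherwise, I would reduce the backtrack at $[v_0]$ to produce a shorter cyclic walk $W_1$ based at $[C_1]=[C_n]$, and re-run the analysis: the interior of $W_1$ is again backtrack-free (in particular, the corner neighbours $[C_{n-1}]$ and $[C_n]=[C_1]$ carry distinct labels $\ell_{n-1}\ne \ell_n$), and the only potential backtrack is at the new cyclic seam $[C_1]$, this time requiring $v_1\sim_\pi v_{n-1}$.

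The crux, and the main combinatorial obstacle, is to track this peeling systematically and show it must terminate. Iteratively, the $k$-th admissible reduction demands an alternating condition: $\ell_{(k+1)/2}=\ell_{n-(k-1)/2}$ when $k$ is odd, or $v_{k/2}\sim_\pi v_{n-k/2}$ when $k$ is even. Full reduction to the trivial walk would require $n$ successful reductions, but the last admissible one inevitably collides with a standing hypothesis: if $n=2m$, the $(2m-1)$-st reduction would demand $\ell_m=\ell_{m+1}$, contradicting consecutive distinctness; if $n=2m+1$, the $(2m)$-th reduction would demand $v_m\sim_\pi v_{m+1}$, contradicting the $\pi$-constraint. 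Either way the peeling halts at a cyclic walk of length $\ge 4$ with no further backtracks, which represents a non-trivial element of $\pi_1(\mcal{GCC}(T^\pi))$ and yields the desired cycle in the graph.
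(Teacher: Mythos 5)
Your proposal is correct and follows the same underlying strategy as the paper: project the cyclic structure of $T$ onto the bipartite graph $\mcal{GCC}(T^\pi)$ to obtain a closed walk, and show that the constraints $v_i\not\sim_\pi v_{i-1}$ and $\ell_i\neq\ell_{i+1}$ prevent it from reducing to the trivial walk, so $\mcal{GCC}(T^\pi)$ cannot be a tree. The paper compresses this into a single sentence, asserting that ``eventually $c^\pi$ backtracks and $\pi$ must identify $v_i$ with $v_{i-1}$,'' whereas your systematic backtrack-peeling makes the termination mechanism explicit and, in doing so, reveals a slight imprecision in the paper's phrasing: the reduction alternately demands label coincidences $\ell_j=\ell_{n-j+1}$ and partition identifications $v_j\sim_\pi v_{n-j}$, and the final forbidden step is $\ell_m=\ell_{m+1}$ when $n=2m$ is even (a label obstruction, not a partition one), versus $v_m\sim_\pi v_{m+1}$ when $n=2m+1$ is odd (matching the paper's claim). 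Your parity-tracking is therefore a genuine completion of the argument rather than merely a restatement of it, and the rest of the verification (no interior backtracks, self-intersections without backtracks are harmless, reduction only ever produces a new potential backtrack at the seam) is carried out correctly.
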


\begin{proof} Consider the simple cycle $c$ of $T$ that visits the edges of $t_1 \etc t_n$ labeled with $b$ in $\bigsqcup_\ell \mcal A_\ell$. For any partition $\pi$ of $\mcal P(V)$, the cycle induces a cycle $c^\pi$ (non simple in general) on the graph of colored components of $T^\pi$. If this graph is a tree, then eventually $c^\pi$ backtracks and  $\pi$ must identify $ v_i$ with  $v_{i-1}$ for some $ i\in \{1,\ldots,n\}$.
\end{proof}

The conclusion so far is that to prove the converge to zero of any $\eps_N$ as in Lemma~\ref{MainLem0}, it suffices to prove the convergence to zero of $\tau_N^0[T^\pi]$, as in Lemma~\ref{MainLem1}, for any $T^{\pi}$ such that $\mcal{GCC}(T^\pi)$ is not a tree.

\subsection{Proof of Proposition 2.3}\label{Sec:mainproof}

Henceforth, we assume that the families of matrices $\mbf A_{N,2}  \etc  \mbf A_{N,L}$ are permutation invariant and we are operating under the assumptions of Proposition \ref{prop:boundsum}. Notations are those of the previous section.

We first prove that, without loss of generality, we can assume that  $\mbf A_{N,1}$ is also permutation invariant. By Lemma \ref{Lem:FNCN}, the quantity $\esp\left[ \frac 1 N \Tr \,  \eps_N \right]$ is a linear combination of terms of the form $\esp\big[ \frac 1N \Tr \, g(\mbf A_N)\big]$, for some graph monomials $g$. Let $U$ be a uniform permutation matrix, independent of $\mbf A_N$. By \cite[Lemma 1.4]{Male2011}, for any graph monomial $g$ one has $\esp\big[ \frac 1 N\Tr \, g(\mbf A_N)\big] = \esp\big[ \frac 1N \Tr \,  g(U\mbf A_NU^t)\big]$. Since the families of matrices are independent, we have equality in distribution
	$$\big(U\mbf A_{N,1}U^t \etc U\mbf A_{N,L}U^t\big)\overset{Law}=\big(U\mbf A_{N,1}U^t,\mbf A_{N,2}  \etc  \mbf A_{N,L}\big).$$
This proves that we can replace $ \mbf A_{N,1} $ by the permutation invariant family $U\mbf A_{N,1}U^t$.

\begin{Lem}\label{MainLem2}
For any partition $\pi$ of $V$ such that $\mcal G \mcal C \mcal C( T^\pi)$ is not a tree, we have $\tau_N^0\big[ T^\pi\big] =O(N^{-1})$.
\end{Lem}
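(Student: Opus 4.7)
The strategy is to exploit the independence of the families and their permutation invariance to decouple $\tau_N^0[T^\pi]$ along colored components, and then to apply the bound \eqref{eq:MSbound} to each color separately. By the reduction at the start of Section~\ref{Sec:mainproof}, we may assume every family $\mbf A_{N,\ell}$ is permutation invariant.

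First, independence of the families factors the expectation by color: for any $\phi:V_\pi\to[N]$,
$$\esp\prod_{e\in E^\pi}K^\pi_e(\phi(w),\phi(v))=\prod_{\ell=1}^L\Phi_\ell(\phi|_{V_\ell}),\qquad \Phi_\ell(\psi):=\esp\prod_{e\in E_\ell}A(\psi(w),\psi(v)),$$
where $V_\ell$ is the set of vertices of $T^\pi$ incident to an edge of color $\ell$; since distinct colored components of one color are vertex-disjoint, $V_\ell=\bigsqcup_{C\in\mcal{CC}_\ell(T^\pi)}V_C$. By permutation invariance of family $\ell$, $\Phi_\ell$ depends only on $\ker\psi$, so it equals a single constant $\Psi_\ell$ on every injective $\psi$. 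Summing only over injective $\phi:V_\pi\to[N]$ gives
$$\tau_N^0[T^\pi]=\frac{(N)_{|V_\pi|}}{N^{c(T^\pi)}}\prod_{\ell=1}^L\Psi_\ell,$$
and the problem reduces to estimating $\prod_\ell \Psi_\ell$.

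Next, to bound each $\Psi_\ell$, root each colored component $C$ at an arbitrary vertex, turning it into a graph monomial $h_C$ with coincident input and output, so that $\Tr(h_C(\mbf A_{N,\ell}))=\sum_{\phi_C:V_C\to[N]}\prod_{e\in C}A(\phi_C(w),\phi_C(v))$. Because the $V_C$'s are disjoint,
$$\sum_{\phi:V_\ell\to[N]}\Phi_\ell(\phi)=\esp\prod_{C\in\mcal{CC}_\ell(T^\pi)}\Tr\bigl(h_C(\mbf A_{N,\ell})\bigr)=O\bigl(N^{\frac{1}{2}\sum_C\mathfrak{f}(h_C)}\bigr)$$
by \eqref{eq:MSbound}. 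Expanding the left-hand side as $\sum_{\rho\in\mcal{P}(V_\ell)}\frac{N!}{(N-|\rho|)!}\Phi_\ell(\rho)$ and applying the same trace bound inductively to each quotient (an elementary M\"obius-type inversion on $\mcal{P}(V_\ell)$), the leading term $\rho=0$ dominates, yielding
$$\Psi_\ell=O\!\left(N^{\frac{1}{2}\sum_{C\in\mcal{CC}_\ell(T^\pi)}\mathfrak{f}(h_C)-|V_\ell|}\right).$$

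Combining the two estimates,
$$|\tau_N^0[T^\pi]|=O\!\left(N^{\,|V_\pi|-c(T^\pi)-\sum_C|V_C|+\frac{1}{2}\sum_C\mathfrak{f}(h_C)}\right),$$
and the conclusion reduces to a combinatorial estimate on the exponent. The bipartite graph $\mcal{GCC}(T^\pi)$ has $|V_\pi|+|\mcal{CC}(T^\pi)|$ vertices, $\sum_{C}|V_C|$ edges, and exactly $c(T^\pi)$ connected components (each colored component being connected), so its first Betti number equals $\sum_C|V_C|-|V_\pi|-|\mcal{CC}(T^\pi)|+c(T^\pi)$. Matching the Mingo--Speicher exponent $\mathfrak{f}(h_C)$ against the vertex count $|V_C|$ and the cycles internal to $C$ (which contribute to $\mathfrak{f}$ but not to the Betti number of $\mcal{GCC}(T^\pi)$), one sees that the displayed exponent equals $c(T^\pi)$ minus the first Betti number of $\mcal{GCC}(T^\pi)$. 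The hypothesis that $\mcal{GCC}(T^\pi)$ is not a tree supplies at least one independent cycle beyond a spanning forest, forcing this exponent to be $\leq -1$, as desired.

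The main obstacle is precisely this last combinatorial reconciliation of the Mingo--Speicher exponents $\mathfrak{f}(h_C)$ against the cyclomatic structure of $\mcal{GCC}(T^\pi)$: one has to verify that cycles living inside a single colored component consume slack in $\mathfrak{f}(h_C)$, while cycles of the incidence graph $\mcal{GCC}(T^\pi)$ remain as a genuine gain, so that the non-tree hypothesis can be turned into a strict improvement in the exponent. The inductive M\"obius step used to pass from the trace bound \eqref{eq:MSbound} to the bound on $\Psi_\ell$ is routine but also needs care to ensure the $\rho=0$ term indeed dominates.
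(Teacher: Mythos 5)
Your setup mirrors the paper's: after reducing (by conjugating $\mbf A_{N,1}$ with a uniform permutation) to the case where every family is permutation invariant, one fixes an injective $\phi$, factors the expectation by color using independence, and plugs in the Mingo--Speicher bound \eqref{eq:MSbound} colorwise. The step where you pass from the trace bound to $\Psi_\ell=O(N^{\frac12\sum_C\mathfrak f(h_C)-|V_\ell|})$ by M\"obius inversion and ``the $\rho=0$ term dominates'' is not automatic: you have to know that $\mathfrak f$ does not increase under vertex identifications, i.e.\ $\mathfrak f(\hat T^\sigma)\le\mathfrak f(\hat T)$; that is exactly what the paper's Lemma~\ref{lem:mingo} supplies, and it deserves to be stated rather than waved at.

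The real gap is the final combinatorial step, and it is a genuine error, not a missing detail. You assert that the exponent $|V_\pi|-c(T^\pi)-\sum_C|V_C|+\tfrac12\sum_C\mathfrak f(h_C)$ \emph{equals} $c(T^\pi)-b_1(\mcal{GCC}(T^\pi))$. Expanding $b_1=\sum_C|V_C|-|V_\pi|-|\mcal{CC}(T^\pi)|+c(T^\pi)$, the exponent is instead $-b_1-|\mcal{CC}(T^\pi)|+\tfrac12\sum_C\mathfrak f(h_C)$, which equals $-b_1$ only when every $\mathfrak f(h_C)=2$ and differs from $c-b_1$ by $\tfrac12\sum_C\mathfrak f(h_C)-|\mcal{CC}|-c$. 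Concretely: take $T^\pi$ a $4$-cycle with alternating colors; then the exponent is $-1$ while $c-b_1=0$. Or take a color-$1$ star with three leaves glued to a color-$2$ triangle on those leaves; then the exponent is $-3/2$ while $c-b_1=-1$. Even granting your identity, $c(T^\pi)-b_1=1-b_1$ only gives $\le 0$ when $b_1\ge 1$, not $\le -1$. What actually closes the argument — and what your sketch does not touch — is the fact that $T^\pi$ is \emph{two-edge connected} (because $T$ is built by closing the $t_i$'s into a cycle with cacti attached, and two-edge connectivity survives quotients). This lets one prune $\mcal{GCC}(T^\pi)$ of degree-one vertices, prove that each surviving colored component $S$ satisfies $\mathfrak f(S)\le\deg_{\tilde{\mcal G}_\pi}(S)$ while removed ones have $\mathfrak f(S)=2$, and then convert the Euler-type identity $|\mcal E|-|\mcal V|=\sum_v(\deg(v)/2-1)$ into the bound $\le -1$. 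Without the two-edge connectivity input, the surplus $\tfrac12\sum_C(\mathfrak f(h_C)-2)$ is uncontrolled and the inequality can fail; your phrase ``one sees that'' is precisely where the proof lives.
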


\begin{proof} Let $\phi$ be an arbitrary injective function from $V^\pi$ to  $[N]$. By permutation invariance, we have \cite[Lemma 4.1]{Male2011}
\begin{align}
\label{eq:tau0_1}
\tau^0_N\big[ T^\pi \big]  = \frac 1 N \frac{ N!}{(N-|V^\pi|)!} \esp\left[ \prod_{ e= (v,w) \in E^\pi} K^\pi_e\big( \phi(w), \phi(v) \big)\right].
\end{align}
For any $1\leq \ell\leq L$, we denote by $T^\pi_\ell=(V_\ell,E_\ell)$ the test graph which is composed of the colored components of $T^\pi$ labelled by the matrices $\mbf A_{N,\ell}$. Then, by independence of the families of matrices: 
	$$\esp\Bigg[ \prod_{ e= (v,w) \in E^\pi} K^\pi_e\big( \phi(w), \phi(v) \big)\Bigg]= \prod_{\ell=1}^L \esp\Bigg[ \prod_{ e= (v,w) \in E_\ell} K^\pi_e\big( \phi(w), \phi(v) \big)\Bigg].$$
Hence, using again \eqref{eq:tau0_1} for each graph $T^\pi_\ell$ we get
	\eq
		\tau_N^0\big[ T^\pi \big] & = & \frac 1 N \frac{ N!}{(N-|V^\pi|)!}  \ \Big(\prod_{\ell=1}^L   \frac{(N-|V_\ell|)!}{ N!}\Big) N^{|\mcal{CC}(T^\pi)|}\times \prod_{\ell=1}^L \tau^0_N\big[ T_\ell^\pi\big]\\
		& = & N^{ -1+|V^\pi| -\sum_{\ell} | V_\ell| + |\mcal{CC}(T^\pi)|}\Big( 1+ O\big(\frac 1N\big) \Big)\times \prod_{\ell=1}^L  \tau^0_N\big[ T_\ell^\pi\big],
	\qe
where $|\mcal{CC}(T^\pi)|$ is the number of colored components of $T^\pi$. Note that the cardinals of the vertex and edge sets of $\mcal G \mcal C \mcal C(T^\pi)$ are $|\mcal V_\pi| = |\mcal{CC}(T^\pi)|+ |V_\pi| $ and $|\mcal E_\pi| = \sum_{\ell} |V_\ell|$, so that 
	$$\tau_N^0\big[ T^\pi \big] = N^{ |\mcal V_\pi| - 1 -|\mcal E_\pi| } \Big( 1+ O\big(\frac 1N\big) \Big)\times \prod_{\ell=1}^L  \tau^0_N\big[ T_\ell^\pi\big].$$
A bound for $ \prod_{\ell=1}^L \tau^0_N\big[ T_\ell^\pi\big]$ is obtained from the growth condition \eqref{eq:MSbound}, for which we need the following definition.

\begin{Def}\label{Def:TEC}
\begin{enumerate}
	\item Recall that a {\em cut edge} of a finite graph is an edge whose deletion increases the number of connected components. 
	\item A {\em two-edge connected graph} is a connected graph that does not contain a cut edge. A {\em two-edge connected component} of a graph is a maximal two-edge connected subgraph. 
	\item For a finite graph $G$, we define $F(G)$ to be the graph whose vertices are the two-edge connected components of $G$ and whose edges are the cut edges linking the two-edge components that contain its adjacent vertices. Note that $F(G)$ is always a forest: we call it the \emph{forest of two-edge connected components} of $G$. 
	\item We define $\mathfrak f(G)$ to be the number of leaves of $F(G)$, with the convention that the trees of $F(G)$ that consist only of one vertex have two leaves.
\end{enumerate}
\end{Def}
\begin{Lem}\label{lem:mingo}We have the following estimate
	$$  \prod_{\ell=1}^L \tau^0_N\big[ T_\ell^\pi\big] = O\left(N^{\sum_\ell\mathfrak f(T^\pi_\ell)/2-|\mcal{CC}(T^\pi)|}\right).$$
\end{Lem}

\begin{proof} Let $\hat T=(\hat V, \hat E)$ be a test graph. By \eqref{RelInjTrace}, we have
	\eq
		\tau^0_N[\hat T] = \sum_{\sigma\in \mcal P(\hat V)} N^{c(\hat T^\sigma) - c(\hat T)} \mrm{Mob}(0,\sigma) \, \tau_N[\hat T^\sigma].
	\qe
Under the assumption of Proposition \ref{prop:boundsum}, we have $\tau_N[\hat T^\sigma] = O( N^{\mathfrak f(\hat T^\sigma)/2-c(\hat T^\sigma)})$. But $\mathfrak f(\hat T^\sigma) \leq \mathfrak f(\hat T)$, so we get $\tau^0_N[\hat T]  = O( N^{\mathfrak f(\hat T)/2-c(\hat T)})$. Applying this fact for each $T_\ell^\pi$ provides the expected result.
\end{proof}
Thus, we have the estimate
	\eqa
		\tau_N^0\big[ T^\pi \big] =O\big( N^{ |\mcal V_\pi| - 1 -|\mcal E_\pi| +\sum_\ell\mathfrak f(T^\pi_\ell)/2-|\mcal{CC}(T^\pi)| }\big).  \label{Eq:FinalEstimate}
	\qea
Let us recall that, for a finite and undirected graph $\mcal G=(\mcal V, \mcal E)$, denoting by $deg_{\mcal G}(v)$ the degree of a vertex $v\in \mcal V$, we have 
	$$ |\mcal E | - | \mcal V| = \sum_{v\in V} \Big( \frac{deg_{\mcal G}(v)}2-1 \Big).$$

Assume $\mcal G \mcal C \mcal C(T^\pi)$ is not a tree. Let $\tilde{\mcal G}_\pi = (\tilde{\mcal V}_\pi, \tilde{\mcal E}_\pi)$ be the graph obtained \emph{pruning} $\mcal G \mcal C \mcal C(T^\pi)$, by removing the vertices of $\mcal G \mcal C \mcal C(T^\pi)$ that are of degree one (as well as the edges attached to these vertices), and iterating this procedure until it does not remain vertices of degree one. We denote by $\tilde V_1$ and $\tilde V_2$ the vertices of $\mcal {CC}(T^\pi)$ and $V_\pi$  respectively that remain in $\tilde{\mcal G}_\pi$ after this process. Then we have
	\eq
		\lefteqn{ |\mcal V_\pi| - |\mcal E_\pi| - 1 +\sum_{\ell=1}^L  \frac{ \mathfrak f(T_\ell^\pi)}2 - |\mcal{CC}(T^\pi)|}\\
		& = &  |\tilde{\mcal V}_\pi| - |\tilde{\mcal E}_\pi| - 1 +\sum_{S \in \mcal C \mcal C(T^\pi)}  \Big(\frac {\mathfrak f(S)}2 -1 \Big)\\
		& = &-1 - 
		\sum_{S \in \tilde V_1}  \Big(\frac {deg_{\tilde{\mcal G}_\pi}(S)}2 -1 \Big) 
		-\sum_{v \in\tilde V_2}  \Big(\frac {deg_{\tilde{\mcal G}_\pi}(v)}2 -1 \Big)  
		+\sum_{S \in \mcal C \mcal C(T^\pi)}  \Big(\frac {\mathfrak f(S)}2 -1 \Big).
	\qe
Note that since $T^\pi$ is two-edge connected, the colored components $S$ such that $\mathfrak f(S)\geq 3$ remain in $\tilde {\mcal G}_\pi$. Indeed, since $T^\pi$ is two-edge connected, each leaf of the tree of two-edge connected components of $S$ corresponds to at least one vertex $v \in \tilde V_\pi$. So the components $S$ that have been removed have necessarily $\mathfrak f(S)=2$. Moreover, for the same reason, we have $\mathfrak f(S) \leq deg_{\tilde{\mcal G}_\pi}(S)$.  Hence, since $deg_{\tilde{\mcal G}_\pi}(v) \geq 2$ for each $v\in \tilde V_2$, the last equality yields \eq
	|\mcal V_\pi| - |\mcal E_\pi| - 1 +\sum_\ell  \frac{\mathfrak{f}(T_\ell^\pi)}{2} -|\mcal{CC}(T^\pi)|\leq -1 - \sum_{v \in\tilde V_2}  \Big(\frac {deg_{\tilde{\mcal G}_\pi}(v)}2 -1 \Big) \leq -1.
	\qe
By Formula \eqref{Eq:FinalEstimate}, we get as expected $\tau_N^0[T^\pi]\limN 0$. 
\end{proof}
\subsection{Proof of Theorem 2.2}\label{Sec:proofmain}Theorem 6 of \cite{Mingo2012} ensures that
$$\Tr \big( g(\mbf A_{N,\ell}) \big)\leq N^{\mathfrak f(g)/2}\prod_{e\in E}\|A_{N,\ell(e)}^{k(e)}\|$$
for any graph monomial $g$ with set of edges $E$ and equal input and output. Because the norm of our matrix are uniformly bounded, the matrices satisfies the growth assumption \eqref{eq:MSbound} of Proposition~\ref{prop:boundsum}, and all the results of the previous section are valid.

\subsection{Proof of Theorem 1.2}\label{Sec:proofFirstTh}

Let $\mbf A_{N,1}  \etc \mbf A_{N,L}$ and $\eps_N$ be as in Theorem \ref{FirstTh}. By enlarging the index set $K$ if necessary, let us denote by $\mbf A_{N,L+1}$ the family of the coefficients $D_{i,\ell}$ of the polynomial $P_{N,\ell}$ defining $\eps_N$. We apply Theorem \ref{MainTh} to the family $\mbf A_{N,1} \etc  \mbf A_{N,L+1}$, specified for the polynomials
	$$\tilde P_{N,\ell} = D_{0,\ell} X_{k_i(1)}D_{1,\ell} \cdots  X_{k_i(d_\ell)}D_{d_\ell,\ell}.$$
Note that each $D_{i,\ell}$ can indeed be written as $g_{i,\ell}(\mbf A_N)$ for $g_{i,\ell}$ a graph monomial consisting in a single loop labeled by the matrix $D_{i,\ell}$ of $\mbf A_{N,L+1}$. This proves the assertion of Theorem \ref{FirstTh} for any $P_{N,\ell}$ with fixed degree and bounded coefficients, which is sufficient to ensure the validity of the theorem in full generality.

\subsection{Proof of Proposition 2.4}

Let us consider the families of random matrices $\tilde{\mathbf{A}}_{N,1},...,\tilde{\mathbf{A}}_{N,L}$,
where we recall that $\tilde{\mathbf{A}}_{N,\ell}:=\big(A_{N,\ell}^{(k)}\circ\Gamma_{\ell}^{(k)}\big)_{k\in K}$,
and $\Gamma=\big(\Gamma_{\ell}^{(k)}\big)_{\ell,k}$ is a family of random matrices, with uniformly bounded entries, independent of the matrices $(\mathbf{A}_{N,1},...,\mathbf{A}_{N,L})$.

According to Section~\ref{Sec:PrelLem}, in order to have the conclusion of Theorem~\ref{MainTh}, it suffices to prove that $\tau_N^0\big[ T^\pi(\tilde {\mbf A}_N)\big]$ converges to $0$ for all test graph $T$ and partition $\pi$ such that $\mcal{GCC}(T^\pi)$ is not a tree. Using \eqref{eq:tau0_1}, we have
\begin{align*}
&\tau_N^0\big[ T^\pi(\tilde {\mbf A}_N)\big]\\
 &=\frac{1}{N} \sum_{\substack{ \phi : V\to [N] \\ \mrm{injective}}}\esp \Big[\prod_{e = (v,w)\in E} A_{\ell(e)}^{k(e)}\big( \phi(w) , \phi(v)\Big] \times \esp \Big[\prod_{e = (v,w)\in E} \Gamma_{\ell(e)}^{k(e)}\big( \phi(w) , \phi(v)\Big]\\
&=\tau_N^0\big[ T^\pi(  {\mbf A_N})\big]\cdot  \frac{(N-|V^\pi|)!}{N!}\sum_{\substack{ \phi : V\to [N] \\ \mrm{injective}}}\esp \Big[\prod_{e = (v,w)\in E} \Gamma_{\ell(e)}^{k(e)}\big( \phi(w) , \phi(v)\Big].
\end{align*}The term $\tau_N^0\big[ T^\pi(  {\mbf A_N})\big]$ tends to $0$ thanks to Section~\ref{Sec:mainproof} and Section~\ref{Sec:proofmain}, and the rightmost term is bounded, since the entries of the matrices $\Gamma_{\ell}^{(k)}$ are bounded. Thus the conclusion of Theorem~\ref{MainTh} are true for $\tilde{\mathbf{A}}_{N,1},...,\tilde{\mathbf{A}}_{N,L}$.

Reasoning as in Section~\ref{Sec:proofFirstTh}, in order to have the conclusion of Theorem~\ref{FirstTh} true for $\tilde{\mathbf{A}}_{N,1},...,\tilde{\mathbf{A}}_{N,L}$, it suffices to have the conclusion of Theorem~\ref{MainTh} true for $\tilde{\mathbf{A}}_{N,1},...,\tilde{\mathbf{A}}_{N,L}, \tilde{\mathbf{A}}_{N,L+1}$, where $\tilde{\mathbf{A}}_{N,L+1}$ is a family of bounded determistic diagonal matrices. We can write $\tilde{\mathbf{A}}_{N,L+1}$ as $\big(A_{N,\ell}^{(k)}\circ\Gamma_{\ell}^{(k)}\big)_{k\in K}$ with $A_{N,L+1}^{(k)}=I_N$ and $\Gamma_{L+1}^{(k)}=\tilde{A}_{N,L+1}^{(k)}$, in such a way that $\mathbf{A}_{N,L+1}=(A_{N,L+1}^{(k)})_{k\in K}$ is permutation invariant. Then we have proved that the conclusion of Theorem~\ref{MainTh} is true for $\tilde{\mathbf{A}}_{N,1},...,\tilde{\mathbf{A}}_{N,L+1}$.

\bibliographystyle{plain}
\bibliography{biblio}
\end{document}